 \def\G{{\mathcal G}}
 \def\opn#1#2{\def#1{\operatorname{#2}}} 
 \opn\chara{char} \opn\length{\ell} \opn\pd{pd} \opn\rk{rk}
 \opn\projdim{proj\,dim} \opn\injdim{inj\,dim} \opn\rank{rank}
 \opn\depth{depth} \opn\grade{grade} \opn\height{height}
 \opn\embdim{emb\,dim} \opn\codim{codim}
 \opn\Tr{Tr} \opn\bigrank{big\,rank}
 \opn\superheight{superheight}\opn\lcm{lcm}
 \opn\trdeg{tr\,deg}
 \opn\reg{reg} \opn\lreg{lreg} \opn\ini{in} \opn\lpd{lpd}
 \opn\size{size} \opn\sdepth{sdepth}
 \opn\link{link}\opn\fdepth{fdepth}\opn\lex{lex}
\def\Gr{\operatorname{Gr\ddot{o}b}} 
\def\LT{\operatorname{LT}}
 \opn\div{div} \opn\Div{Div} \opn\cl{cl} \opn\Cl{Cl}
 \opn\Spec{Spec} \opn\Supp{Supp} \opn\supp{supp} \opn\Sing{Sing}
 \opn\Ass{Ass} \opn\Min{Min}\opn\Mon{Mon}
 \opn\Ann{Ann} \opn\Rad{Rad} \opn\Soc{Soc}
 \opn\Im{Im} \opn\Ker{Ker} \opn\Coker{Coker} \opn\Am{Am}
 \opn\Hom{Hom} \opn\Tor{Tor} \opn\Ext{Ext} \opn\End{End}
 \opn\Aut{Aut} \opn\id{id}
 \opn\nat{nat}
 \opn\pff{pf}
 \opn\Pf{Pf} \opn\GL{GL} \opn\SL{SL} \opn\mod{mod} \opn\ord{ord}
 \opn\Gin{Gin} \opn\Hilb{Hilb}\opn\sort{sort}
 \opn\aff{aff} \opn
\opn\relint{relint} \opn\st{st}
 \opn\lk{lk} \opn\cn{cn} \opn\core{core} \opn\vol{vol}  \opn\inp{inp} \opn\nilpot{nilpot}
 \opn\link{link} \opn\star{star}\opn\lex{lex}\opn\set{set}
 \opn\width{wd}
 \opn\ecart{ecart}
 \opn\gr{gr}
 \def\pot#1#2{#1[\kern-0.28ex[#2]\kern-0.28ex]}
 \opn\dirlim{\underrightarrow{\lim}}
 \opn\inivlim{\underleftarrow{\lim}}
 \let\sect=\cap
 \def\Implies{\ifmmode\Longrightarrow \else
         \unskip${}\Longrightarrow{}$\ignorespaces\fi}
 \def\implies{\ifmmode\Rightarrow \else
         \unskip${}\Rightarrow{}$\ignorespaces\fi}
 \def\iff{\ifmmode\Longleftrightarrow \else
         \unskip${}\Longleftrightarrow{}$\ignorespaces\fi}
 \newtheorem{Theorem}{Theorem}[section]
 \newtheorem{Lemma}[Theorem]{Lemma}
 \newtheorem{Corollary}[Theorem]{Corollary}
 \newtheorem{Proposition}[Theorem]{Proposition}
 \newtheorem{Remark}[Theorem]{Remark}
 \newtheorem{Example}[Theorem]{Example}
 \newtheorem{Definition}[Theorem]{Definition}
 \let\epsilon\varepsilon
 \let\kappa=\varkappa
 \def\qed{\ifhmode\textqed\fi
       \ifmmode\ifinner\quad\qedsymbol\else\dispqed\fi\fi}
 \def\textqed{\unskip\nobreak\penalty50
        \hskip2em\hbox{}\nobreak\hfil\qedsymbol
        \parfillskip=0pt \finalhyphendemerits=0}
 \def\dispqed{\rlap{\qquad\qedsymbol}}
 \opn\dis{dis}
 \def\pnt{{\raise0.5mm\hbox{\large\bf.}}}
 \opn\Lex{Lex}
\begin{document}

\title {Gr\"{o}bner-nice pairs of ideals}

\author {Mircea Cimpoea\c s}
\address{Mircea Cimpoea\c s, Simion Stoilow Institute of Mathematics of the Romanian Academy, 
Research unit 5,  P.O.Box 1-764,  Bucharest   014700, Romania }
\email{mircea.cimpoeas@imar.ro}
 
\author {Dumitru I. Stamate}
\address{Dumitru I. Stamate, Faculty of Mathematics and Computer Science, University of Bucharest, Str. Academiei 14, Bucharest, Romania, and  \newline  \indent
Simion Stoilow Institute of Mathematics of the Romanian Academy, Research group
of the project ID-PCE-2011-1023, P.O.Box 1-764, Bucharest 014700, Romania} 

\email{dumitru.stamate@fmi.unibuc.ro}

\begin{abstract}
We introduce the concept of a Gr\"{o}bner nice pair of ideals in a polynomial ring and we present some  applications.
\end{abstract}

\thanks{}
\subjclass[2010]{Primary 13P10, 13F20; Secondary 13P99}
 

\keywords{Gr\"{obner} basis, polynomial ring, S-polynomial, regular sequence}

\maketitle

\section*{Introduction}

One feature of a Gr\"obner basis is that it extends a system of generators for an ideal in a polynomial ring so that  several invariants or algebraic properties are easier to read.
It is a natural question to ask how to obtain a Gr\"obner basis for an ideal obtained by performing  basic algebraic operations.
A first situation we discuss in this note is when a Gr\"obner basis for the sum of the ideals $I$ and $J$ is obtained by taking the union of two Gr\"obner bases for the respective ideals. In that case we say that $(I,J)$ is a Gr\"obner nice pair ($G$-nice pair, for short). In Theorem~\ref{thm:main} we prove that for any given monomial order, $(I,J)$ is a $G$-nice pair if and only if $\ini(I+J)=\ini(I)+ \ini(J)$, which is also equivalent to having $\ini(I\cap J)=\ini(I) \cap \ini(J)$. 

Given the ideals $I$ and $J$ in a polynomial ring, they could be a $G$-nice pair for some, for any, or for no monomial order. Situations of $G$-nice pairs of ideals have naturally occurred in the literature, especially related to ideals of minors, e.g. \cite{conca, HoSu, MohRauh}.

One application of the new concept is in Corollary~\ref{many-regular}: assume  $J$ is any ideal in the polynomial ring $S$  and let $f_1,\ldots,f_r$ in $S$ be a regular sequence on $S/J$. Then the sequence $\ini(f_1),\ldots,\ini(f_r)$ is regular on $S/\ini(J)$
if and only if $
\ini(J,f_1,\ldots,f_r)=\ini(J)+(\ini(f_1),\ldots,\ini(f_r))
$. 

We introduced the notion of a Gr\"obner nice pair in an attempt to unify some cases of distributivity in the lattice of ideals in a polynomial ring $S$. For instance, one consequence of Proposition~\ref{prop:distributivity} is that if $(J,E), (J,E')$ and $(J, E\cap E')$ are $G$-nice pairs, then $(J+E)\cap (J+E')=J+ (E\cap E')$ if and only if $\ini((J+E)\cap (J+E'))= \ini(J) +\ini(E\cap E')$.
Moreover, in Proposition~\ref{intersections-monomials} we show that when $(E_i)_{i\in \Lambda}$ is a family of monomial ideals  such that $(J,E_i)$ is $G$-nice for all $i\in \Lambda$, then $(J, \bigcap_{i\in \Lambda} E_i)$ is a $G$-nice pair and $\bigcap_{i\in \Lambda} (J+E_i)= J+ (\bigcap_{i \in \Lambda} E_i)$.

One problem we raise in Section~\ref{sec2} is how to efficiently transform a pair of ideals $(J,E)$ into a $G$-nice pair $(J,F)$ so that $F\supseteq E$. We show that in general there is no minimal such ideal $F$  so that also $J+F=J+E$, see Example~\ref{exm:infinite}. However, if $E$ is a monomial ideal then we may consider $\widehat{E}$ the smallest monomial ideal in $S$ containing $E$ and so that $(J,\widehat{E})$ is $G$-nice. Furthermore, when $J$ is a binomial ideal (i.e. it is generated by binomials) then $J+E=J+\widehat{E}$, see Corollary~\ref{binomial-hat}.

Buchberger's criterion (\cite{EH, Eis, sing-book}) asserts that a set of polynomials form a Gr\"obner basis for the ideal they generate if and only if for any two elements in the set their $S$-polynomial reduces to zero with respect to the given set. Having this fact in mind, a special class of Gr\"obner nice pairs is introduced in Section~\ref{sec3}. Namely, if $\G_J$ is a Gr\"obner basis for $J$, then we say that the ideal $E$ is $S$-nice with respect to $\G_J$ if for all $f\in \G_J$ and $g\in E$ their $S$-polynomial $S(f,g)\in E$. 
To check that property it is enough to verify it for $g$ in some Gr\"obner basis for $E$, see Proposition~\ref{snice}. A good property is that if $E_i$ is $S$-nice with respect to $\G_J$ for all $i\in \Lambda$, then so is $\bigcap_{i\in \Lambda} E_i$.

This way, given $\G_J$, for any ideal (resp. monomial ideal $E$) we can define the $S$-nice (monomial) closure: $\widetilde{E}$ (resp. $E^\sharp$) is the smallest (monomial) ideal which is $S$-nice w.r.t. $\G_J$ and we show  how to compute it in Proposition~\ref{inductive}.
While $J+\widetilde{E}=J+E$, it is not always the case that $J+E^\sharp=J+E$.  In Proposition~\ref{proplast} we show that if $\G_J$ consists of binomials and $E$ is a monomial ideal, then $\widehat{E}=\widetilde{E}$.

We provide many examples for the notions that we introduce.

\section{Gr\"obner-nice pairs of ideals}
\label{sec:gnice}
	Let $K$ be any field. Throughout this paper, we usually denote by $S$ the polynomial ring $K[x_1,\ldots,x_n]$, unless it is stated otherwise. For an ideal $I$ in $S$ and a monomial order $\leq$ on $S$,  the set of all Gr\"obner bases of $I$ with respect to the given monomial order will be denoted $\Gr_\leq (I)$, or simply $\Gr(I)$ when there is no risk of confusion.  As a piece of notation, with respect to a fixed monomial order,  for $f$ in $S$ its leading term is denoted $\LT(f)$, and its leading monomial $\ini(f)$.
	
 The following result is at the core of our work.

\begin{Theorem}
\label{thm:main}
We fix a monomial order in the polynomial ring $S$.
Let $J$ and $E$ be ideals in $S$.  The following conditions are equivalent:
\begin{enumerate}
\item[(a)] $\ini(J+E)=\ini(J)+\ini(E)$;
\item[(b)] for any $\mathcal G_J \in \Gr(J)$ and $\mathcal G_E \in \Gr(E)$, we have $\mathcal G_J \cup \mathcal G_E \in \Gr(J+E)$;
\item[(c)] there exist   $\mathcal G_J \in \Gr(J)$ and $\mathcal G_E \in \Gr(E)$, such that $\mathcal G_J \cup \mathcal G_E \in \Gr(J+E)$;
\item[(d)] $\ini(J\cap E) = \ini(J)\cap \ini(E)$;
\item[(e)] for any $f\in J$ and $g\in E$, there exists $h\in J\cap E$ such that $\ini(h)=\lcm(\ini f,\ini g)$;
\item[(f)] for any $0\neq h\in J+E$, there exist $f\in J$ and $g\in E$ with $h=f-g$ and $\ini(f)\neq \ini(g)$.
\end{enumerate}
\end{Theorem}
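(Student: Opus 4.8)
The plan is to split the six conditions into the easy cluster (a)--(c), which are equivalent almost by the definition of a Gr\"obner basis, and then to transport the real content around a cycle joining (a), (f), (d), (e). Throughout I will use the two inclusions that hold for free, namely $\ini(J)+\ini(E)\subseteq \ini(J+E)$ and $\ini(J\cap E)\subseteq \ini(J)\cap\ini(E)$, so that (a) and (d) are each the assertion of a reverse inclusion. For (a)$\Rightarrow$(b)$\Rightarrow$(c)$\Rightarrow$(a): given any $\mathcal G_J\in\Gr(J)$ and $\mathcal G_E\in\Gr(E)$, the set $\mathcal G_J\cup\mathcal G_E$ is contained in $J+E$ and the ideal generated by its leading monomials equals $\ini(J)+\ini(E)$; hence $\mathcal G_J\cup\mathcal G_E$ is a Gr\"obner basis of $J+E$ exactly when $\ini(J)+\ini(E)=\ini(J+E)$, a condition independent of the chosen bases, which settles all three implications at once.

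Next I treat (d)$\Leftrightarrow$(e). Writing $\ini(J)\cap\ini(E)$ via its monomial generators $\lcm(u,v)$, with $u,v$ minimal generators of $\ini(J)$ and $\ini(E)$ respectively, and using that every monomial in an initial ideal is the leading monomial of an actual element, condition (e) says precisely that each such generator lies in $\ini(J\cap E)$, i.e.\ $\ini(J)\cap\ini(E)\subseteq\ini(J\cap E)$, which with the free inclusion is (d). Conversely, assuming (d), for arbitrary $f\in J$ and $g\in E$ the monomial $\lcm(\ini f,\ini g)$ lies in $\ini(J)\cap\ini(E)=\ini(J\cap E)$ and hence equals $\ini(h)$ for some $h\in J\cap E$, which is (e).

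To link the two clusters I run the cycle (a)$\Rightarrow$(f)$\Rightarrow$(d)$\Rightarrow$(a). For (a)$\Rightarrow$(f), fix $0\neq h\in J+E$ and induct on $\ini(h)$, which is legitimate since the monomial order is a well-order. As $\ini(h)\in\ini(J)+\ini(E)$ is a monomial in a sum of monomial ideals, it lies in $\ini(J)$ or in $\ini(E)$; in the former case I subtract from $h$ a suitable monomial multiple of an element of $\mathcal G_J$ to cancel $\LT(h)$, obtaining $h'\in J+E$ with $\ini(h')<\ini(h)$, apply the inductive representation to $h'$, and reassemble to get $h=f-g$ with $\ini f\neq\ini g$. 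For (f)$\Rightarrow$(d), take a monomial $m\in\ini(J)\cap\ini(E)$ together with monic $f\in J$ and $g\in E$ satisfying $\ini f=\ini g=m$; applying (f) to $h=f-g$ (when it is nonzero) yields $f-g=f'-g'$ with $\ini f',\ini g'<m$, so that $p:=f-f'=g-g'$ lies in $J\cap E$ with $\ini(p)=m$, proving $m\in\ini(J\cap E)$.

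The crux is (d)$\Rightarrow$(a), the only place where hypothesis (d) is genuinely consumed. For $0\neq h\in J+E$ I choose, among all representations $h=f+g$ with $f\in J$ and $g\in E$, one minimizing $\max(\ini f,\ini g)=:w$, which exists by well-ordering. If the leading terms of $f$ and $g$ cancelled, then $w\in\ini(J)\cap\ini(E)=\ini(J\cap E)$ by (d), so there is a monic $p\in J\cap E$ with $\ini(p)=w$; subtracting the appropriate scalar multiple of $p$ from $f$ and adding it to $g$ preserves the sum $h$ while strictly lowering $\max(\ini f,\ini g)$, contradicting minimality. Hence no cancellation occurs, so $\ini(h)=w\in\ini(J)+\ini(E)$. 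I expect this minimal-representation, no-cancellation step to be the main obstacle, as it is the sole point that converts the intersection hypothesis (d) into control over the leading term of a sum; once it is in place, the remaining implications are formal manipulations with leading monomials.
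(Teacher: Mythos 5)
Your proof is correct, but it routes the equivalences differently from the paper, and the differences are worth recording. The paper's scheme is (a)$\iff$(b)$\iff$(c), then (a)$\implies$(d) directly, (d)$\iff$(e) dismissed as a ``restatement,'' (e)$\implies$(a), and finally (d)$\implies$(f)$\implies$(a); all three nontrivial links are iterative reduction loops (``repeat the procedure until it stops''), with termination justified by Dickson's Lemma or by the descent of leading monomials. You instead run the cycle in the opposite direction, (a)$\implies$(f)$\implies$(d)$\implies$(a), and each leg uses a different, arguably tighter, mechanism: (a)$\implies$(f) is a clean strong induction on $\ini(h)$ with the reassembly $h=(f'+cmu)-g'$, where the top term coming from $\ini(J)$ (or $\ini(E)$) dominates and prevents cancellation; (f)$\implies$(d) needs only a \emph{single} application of (f) to $h=f-g$ to produce $p=f-f'=g-g'\in J\cap E$ with $\ini(p)=m$, where the paper's corresponding step (d)$\implies$(f) is its longest case analysis; and your (d)$\implies$(a) replaces iteration altogether by an extremal argument --- pick a representation $h=f+g$ minimizing $w=\max(\ini f,\ini g)$, and note that top cancellation would let a monic $p\in J\cap E$ with $\ini(p)=w$ strictly lower the maximum, contradicting minimality. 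You also give real content to (d)$\iff$(e), via the standard fact that $\ini(J)\cap\ini(E)$ is generated by the monomials $\lcm(u,v)$, which is more careful than the paper's one-line assertion. What the paper's approach buys is proximity to the actual division algorithm (its loops are explicit reductions one could implement); what yours buys is brevity and a single, clearly localized appeal to well-ordering per implication, rather than termination arguments for iterative procedures. Two cosmetic points you should patch: in the induction for (a)$\implies$(f), say explicitly what happens when the reduction $h'=h-cmu$ vanishes (then $h\in J$ and one takes $g=0$, with the same convention the paper uses when $h\in J$ or $h\in E$); and in (d)$\implies$(a), state the convention for $\max(\ini f,\ini g)$ when $f$ or $g$ is zero. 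Neither affects correctness.
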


\begin{proof} We set $I=J+E$.

 (a) \implies (b):  Let $\mathcal G_J = \{f_1,\ldots,f_r\} \in \Gr(J)$ and $\mathcal G_E=\{g_1,\ldots,g_p\}\in \Gr(E)$. 
Clearly,  $\mathcal G_J\cup \mathcal G_E$ generates the ideal $I$. 
Since $\ini(J)=(\ini(f_1),\ldots,\ini(f_r))$ and $\ini(E)=(\ini(g_1),\ldots,\ini(g_p))$,  
by (a), it follows that $\ini(I) = (\ini(f_1),\ldots,\ini(f_r),\ini(g_1),\ldots,\ini(g_p))$ and therefore, $\mathcal G_J\cup \mathcal G_E$ is a Gr\"obner basis of $I$.

(b) \implies (c) is trivial. 

 (c)\implies (a): Let $\mathcal G_J = \{f_1,\ldots,f_r\} \in \Gr(J)$ and $\mathcal G_E=\{g_1,\ldots,g_p\}\in \Gr(E)$, 
such that $\mathcal G_J \cup \mathcal G_E \in \Gr(J+E)$. It follows that $\ini(J+E)=\ini(J)+\ini(E)$, as required.

(a) \implies (d): Since always $\ini(J\cap E) \subseteq \ini(J)\cap \ini(E)$, it remains to prove the other inclusion. 
Let $m$ be any monomial in $\ini(J)\cap \ini(E)$. 
Hence there exist $f\in J$ and $g\in E$ such that $m=\ini(f)=\ini(g)$. If $f=g$ then $m\in \ini(J\cap E)$ and we are done. 

Otherwise, since
$f-g\in J+E$, by (a), $\ini(f-g)\in \ini(J)+\ini(E)$. If $\ini(f-g)\in \ini(J)$, then there exists $f_1\in J$ with $\LT(f-g)=\LT(f_1)$, and we set 
$g_1=0$. If $\ini(f-g)\in \ini(E)$, then there exists $g_1\in E$ with $\LT(f-g)=\LT(g_1)$, and we set 
$f_1=0$. In either case, we have $\ini(f-f_1)=\ini(g-g_1)=m$. If $f-f_1=g-g_1$ we are done. Otherwise, since $\ini((f-f_1)-(g-g_1))<\ini(f-g)$, 
we repeat the above procedure for $f-f_1$ and $g-g_1$. By Dickson's Lemma (\cite[Theorem 1.9]{EH}) this process eventually stops, hence $m\in \ini(J\cap E)$.

Condition (e) is a restatement of (d). 

(e) \implies (a):
  It is enough to prove that $\ini(J+E)\subseteq \ini(J)+\ini(E)$. Let $p\in J+E$ and write $p=f-g$, with $f\in J$ and $g\in E$.
If $\LT(f)\neq \LT(g)$ then $\ini(p)=\ini(f)$ or $\ini(p)=\ini(g)$ and we are done. Assume $\LT(f)=\LT(g)$. By (e), there exists $h\in J\cap E$ with 
$\LT(h)=\LT(f)=\LT(g)$. We let $f_1=f-h$ and $g_1=g-h$. We note that $p=f-g=f_1-g_1$, $\ini(f_1)<\ini(f)$ and $\ini(g_1)<\ini(g)$. If $\LT(f_1)\neq \LT(g_1)$  then $\ini(p)\in \ini(J)+\ini(E)$, arguing as above. Otherwise, we repeat the same procedure for $f_1$ and $g_1$ until it stops.

(f) \implies (a) is obvious. 

(d) \implies (f): Let $h\in I$. We write $h=f-g$ with $f\in J$ and $g\in E$. If $h\in J$ or $h\in E$ then there is nothing to prove. 
 
If $\ini(f)\neq \ini(g)$, we are done. 
Otherwise, assume $\ini(f)=\ini(g)$. 
We distinguish two cases, depending whether $\ini(f) =\ini(h)$ or $\ini(f)>\ini(h)$.
 
If  $\ini(h)=\ini(f)=\ini(g)$,  according to (d) we have that $\ini(h)\in \ini(J)\cap \ini(E) = \ini(J\cap E)$. Therefore, there exists $w\in J\cap E$ with $\LT(w)=\LT(g)$.
We write $h=(f-w)-(g-w)$. Note that $f-w\in J$, $g-w\in E$ and $\ini(g-w)<\ini(g)=\ini(h)$. It follows that $\ini(h)=\ini(f-w)>\ini(g-w)$ so we are done.
Note that $g-w\neq 0$, otherwise  $h$ would be in $J$, a contradiction.

If $\ini(h)< \ini(f)=\ini(g)=:m$, since $\ini(h)=\ini(f-g)<m$, it follows that $\LT(f)=\LT(g)$. Also, $m\in \ini(J)\cap \ini(E)=\ini(J\cap E)$. Let $w\in J\cap E$
with $\ini(w)=m$. We can assume $\LT(w)=\LT(f)=\LT(g)$. We write $h=f-g=f_1-g_1$, where $f_1=f-w$ and $g_1=g-w$. 
Arguing as before, $0\neq f_1 \in J$, $0\neq g_1\in E$, 
$\ini(f_1) < m$ and $\ini(g_1) < m$. If $\ini(f_1)\neq \ini(g_1)$ we are done. If $\ini(f_1)=\ini(g_1)=\ini(h)$ we are done by the discussion of the former case. Otherwise, we apply the same procedure for $f_1$ and $g_1$, until it  eventually stops.
\end{proof}

\begin{Remark}
{\em
The equivalence of the conditions (a), (d) and (e) in Theorem \ref{thm:main} was proved in a different way by A.~Conca in \cite[Lemma 1.3]{conca} under the extra hypothesis that the ideals $J$ and $E$ are homogeneous.
}
\end{Remark}

\begin{Definition}
\label{def:gnice}
{\em
Let $S$ be a polynomial ring with a fixed monomial order.  If the ideals $J$ and $E$ of $S$   fulfill 
one of the equivalent conditions of Theorem \ref{thm:main}, we say that
$(J,E)$ is a {\em Gr\"obner nice (G-nice) pair} of ideals.
}
\end{Definition}

\begin{Remark}
{\em
The chosen monomial order is essential. For example, let $J=(x^2+y^2)$, $E=(x^2)$ and 
    $I=J+E = (x^2,y^2)$ as ideals in $S=K[x,y]$. 
   If $x>y$,  then $\ini(J)=\ini(E)=(x^2)$, but $\ini(I)=(x^2,y^2)$. Thus the pair $(J,E)$ is not $G$-nice.
   On the other hand, if $y>x$,  then $\ini(I)=\ini(J)+\ini(E)=(x^2,y^2)$, hence the pair $(J,E)$ is $G$-nice.

On the other hand, some pairs of ideals are never $G$-nice, regardless of the monomial order which is used.
Indeed, let $J=(x^2+y^2)$ and $E=(xy)$ in $S=K[x,y]$. It is easy to see that $S(x^2+y^2, xy)$ equals either $y^3$ (if  $x^2>y^2$), or $x^3$ (if $x^2<y^2$), and in either case  $\ini (S(x^2+y^2, xy) ) \notin (\ini (J), \ini (E))$.
}
\end{Remark}

Here are some examples of classes of Gr\"obner nice pairs of ideals.
\begin{Example}
{\em
\begin{enumerate}
\item If $J \subseteq E\subseteq S$, then the pair $(J,E)$ is $G$-nice.
\item If $J$ and $E$ are monomial ideals in $S$, then the pair $(J,E)$ is $G$-nice.
\item If $J$ and $E$ are ideals in $S$ whose generators involve disjoint sets of variables, then the pair $(J,E)$ is $G$-nice.
\item (Conca \cite{conca}) Let $X=(x_{ij})$ be an $n\times m$ matrix of indeterminates and 
let $Z$ be a set of consecutive rows (or columns) of $X$. For  $t$  an integer with $1\leq t \leq \min\{n,m\}$
we let $J=I_t(X)$ be the ideal in $S=K[X]$ generated by the $t$-minors of $X$. Also, let 
$E=I_{t-1}(Z)\subset S$. Then $(J,E)$ is a $G$-nice pair of ideals, according to \cite[Proposition 3.2]{conca}.

More generally, \cite[Proposition 3.3]{conca} states that when $Y$ is a {\em ladder}, $J=I_t(X)$ and $E=I_{t-1}(Y\cap Z)$,
then the pair $(J,E)$ is $G$-nice. We refer to \cite{conca} for the unexplained terminology and 
further details on this topic.
\item Ideals generated by various minors in a generic matrix are a source of $G$-nice pairs of ideals, see \cite[Lemma 4.2]{HoSu}, \cite[Lemma 2.10]{MohRauh}.
\end{enumerate}
}
\end{Example}

The following results characterize situations when one of the ideals of the $G$-pair is generated by a regular sequence.

\begin{Proposition}
\label{prop:regular}
Let $J$ be an ideal in  the polynomial ring $S$ and let $f\in S$ which is regular on $S/J$. 
The following conditions are equivalent:
\begin{enumerate}
\item[(a)] $\ini(J,f)=\ini(J)+(\ini(f))$;
\item[(b)] $\ini(f)$ is regular on $S/\ini(J)$.
\end{enumerate}
\end{Proposition}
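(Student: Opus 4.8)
The plan is to recognize that condition (a) of the Proposition says precisely that $(J,(f))$ is a $G$-nice pair, and then to translate it through Theorem~\ref{thm:main}. First I would note that for a principal ideal one has $\ini((f))=(\ini(f))$, since every element of $(f)$ has the form $af$ and $\ini(af)=\ini(a)\ini(f)$; hence condition (a) is exactly the equality $\ini(J+(f))=\ini(J)+\ini((f))$, i.e.\ the $G$-niceness of $(J,(f))$. By the equivalence (a)$\Leftrightarrow$(d) of Theorem~\ref{thm:main}, this holds if and only if
\[
\ini(J\cap (f))=\ini(J)\cap (\ini(f)).
\]
So it suffices to compute each side of this identity and to match the outcome against condition (b).

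Next I would use the regularity of $f$ on $S/J$ to show $J\cap (f)=fJ$. The inclusion $fJ\subseteq J\cap (f)$ is clear, and conversely if $af\in J$ then regularity of $f$ on $S/J$ forces $a\in J$, so $af\in fJ$. Since every element of $fJ$ is of the form $fg$ with $g\in J$ and $\ini(fg)=\ini(f)\ini(g)$, it follows that
\[
\ini(J\cap (f))=\ini(fJ)=(\ini(f))\,\ini(J).
\]
On the other hand, for the monomial ideal $\ini(J)$ and the monomial $\ini(f)$ the standard colon identity for monomial ideals gives
\[
\ini(J)\cap (\ini(f))=(\ini(f))\,(\ini(J):\ini(f)).
\]

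Combining these two computations, condition (d) of Theorem~\ref{thm:main} becomes $(\ini(f))\,\ini(J)=(\ini(f))\,(\ini(J):\ini(f))$. Because multiplication by the single monomial $\ini(f)$ is injective on monomial ideals, I can cancel the factor $(\ini(f))$ and obtain the equivalent equality $\ini(J)=(\ini(J):\ini(f))$, which is exactly condition (b), namely that $\ini(f)$ is regular on $S/\ini(J)$. This closes the chain of equivalences.

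I expect the main obstacle to be the careful justification of the two displayed monomial computations: in particular the passage $\ini(fJ)=(\ini(f))\,\ini(J)$ (which rests on $\ini$ being multiplicative on products and on $fJ=\{fg:g\in J\}$), together with the cancellation of the factor $(\ini(f))$ from a product of monomial ideals. The identity $J\cap (f)=fJ$ is where the regularity hypothesis on $f$ genuinely enters, so getting that inclusion right is essential; the remaining steps are bookkeeping with leading terms and with the colon ideal $(\ini(J):\ini(f))$.
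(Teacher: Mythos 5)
Your proof is correct and takes essentially the same route as the paper's: both reduce condition (a) to Theorem~\ref{thm:main}(d), use the regularity of $f$ on $S/J$ to get $\ini(J\cap(f))=\ini(fJ)=\ini(f)\ini(J)$, and then identify the equality $\ini(f)\ini(J)=\ini(J)\cap(\ini(f))$ with condition (b) using that $S$ is a domain. Your explicit appeal to the colon identity $\ini(J)\cap(\ini(f))=\ini(f)\,(\ini(J):\ini(f))$ and the cancellation of $\ini(f)$ simply spells out the step the paper compresses into ``which is a restatement of the condition (b), since $S$ is a domain.''
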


\begin{proof}
We note that since $f$ is regular on $S/J$ we get that 
$$\ini(J\cap(f))=\ini(fJ)=\ini(f)\ini(J).$$
 By Theorem \ref{thm:main}(d), property (a) is equivalent to $\ini(J\cap(f))=\ini(J)\cap (\ini(f))$. 
That in turn is equivalent to $\ini(f)\ini(J)=\ini(J)\cap (\ini(f))$, which is a restatement of the condition (b), since $S$ is a domain.
\end{proof}

\begin{Corollary}
\label{many-regular}
Let $J$ be any ideal in the polynomial ring $S$  and let $f_1,\ldots,f_r$ in $S$ be a regular sequence on $S/J$. Then the sequence $\ini(f_1),\ldots,\ini(f_r)$ is regular on $S/\ini(J)$
if and only if 
$$
\ini(J,f_1,\ldots,f_r)=\ini(J)+(\ini(f_1),\ldots,\ini(f_r)).
$$

In particular, if $f_1,\ldots,f_r$ is a regular sequence on $S$, then $\ini(f_1),\ldots,\ini(f_r)$ is regular on $S$ if and only if 
$\{f_1,\ldots,f_r\}$ is a Gr\"obner basis for $(f_1,\ldots,f_r)$.
\end{Corollary}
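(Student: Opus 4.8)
The plan is to argue by induction on $r$, using Proposition~\ref{prop:regular} both as the base case $r=1$ and as the engine of the inductive step. Throughout, set $J_k=(J,f_1,\ldots,f_k)$ and $L_k=\ini(J)+(\ini(f_1),\ldots,\ini(f_k))$ for $0\le k\le r$, so that $J_0=J$, $L_0=\ini(J)$, and the assertion to prove is that $\ini(f_1),\ldots,\ini(f_r)$ is regular on $S/\ini(J)$ if and only if $\ini(J_r)=L_r$. Since an initial segment of a regular sequence is again a regular sequence, $f_{k+1}$ is regular on $S/J_k$ for each $k$, so Proposition~\ref{prop:regular} is applicable to the pair $J_k,f_{k+1}$. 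Note also the always-valid chain $L_k=L_{k-1}+(\ini(f_k))\subseteq \ini(J_{k-1})+(\ini(f_k))\subseteq \ini(J_k)$, since $J_{k-1}\subseteq J_k$ and $f_k\in J_k$.

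For the forward implication I would prove by induction on $k$ that $\ini(J_k)=L_k$ for all $k$. This is clear for $k=0$. Assuming $\ini(J_{k-1})=L_{k-1}$, regularity of the sequence $\ini(f_1),\ldots,\ini(f_r)$ gives that $\ini(f_k)$ is a nonzerodivisor on $S/L_{k-1}=S/\ini(J_{k-1})$, so Proposition~\ref{prop:regular} applied to $J_{k-1}$ and $f_k$ yields $\ini(J_k)=\ini(J_{k-1})+(\ini(f_k))=L_{k-1}+(\ini(f_k))=L_k$. Taking $k=r$ gives $\ini(J_r)=L_r$.

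The converse is the substantial direction, and the hard part will be isolating the following purely combinatorial fact about monomial ideals: if $B\subseteq A$ are monomial ideals in $S$, $g$ is a monomial that is a nonzerodivisor on $S/A$, and $A+(g)=B+(g)$, then $A=B$. I would prove this by induction on the degree of a monomial $m\in A$: if $m\notin B$, then $m\in A+(g)=B+(g)$ forces $g\mid m$, say $m=g\,m'$; as $g$ is a nonzerodivisor on $S/A$ and $g\,m'\in A$, we obtain $m'\in A$, and since $\deg m'<\deg m$ (because $g=\ini(f_r)$ has positive degree, $f_r$ being a nonunit as $J_r\neq S$) the induction hypothesis gives $m'\in B$, whence $m=g\,m'\in B$, a contradiction.

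Granting this, suppose $\ini(J_r)=L_r$ and induct on $r$, the base $r=1$ being Proposition~\ref{prop:regular} itself. The chain above collapses to equalities, so in particular $\ini(J_r)=\ini(J_{r-1})+(\ini(f_r))$ and $\ini(J_{r-1})+(\ini(f_r))=L_{r-1}+(\ini(f_r))$. From the first equality, Proposition~\ref{prop:regular} applied to $J_{r-1}$ and $f_r$ shows that $\ini(f_r)$ is a nonzerodivisor on $S/\ini(J_{r-1})$; the monomial fact above, applied with $A=\ini(J_{r-1})$, $B=L_{r-1}$ and $g=\ini(f_r)$, then forces $\ini(J_{r-1})=L_{r-1}$. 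This is exactly the descent that makes the induction run: the hypothesis of the $(r-1)$-case now holds, so $\ini(f_1),\ldots,\ini(f_{r-1})$ is regular on $S/\ini(J)$, while $\ini(f_r)$ is regular on $S/\ini(J_{r-1})=S/L_{r-1}$, and concatenating gives that $\ini(f_1),\ldots,\ini(f_r)$ is regular on $S/\ini(J)$. Finally, the ``in particular'' statement is the case $J=0$: there $\ini(J_r)=\ini((f_1,\ldots,f_r))$ and $L_r=(\ini(f_1),\ldots,\ini(f_r))$, so $\ini(J_r)=L_r$ says precisely that $\{f_1,\ldots,f_r\}$ is a Gr\"obner basis of $(f_1,\ldots,f_r)$.
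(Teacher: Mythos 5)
Your proof is correct and takes the same route as the paper: the paper's entire proof reads ``This follows from Proposition~\ref{prop:regular} by induction on $r$,'' which is exactly your structure (base case and inductive engine both being Proposition~\ref{prop:regular}). The monomial cancellation lemma you isolate (if $B\subseteq A$ are monomial ideals, $g$ is a monomial of positive degree that is a nonzerodivisor on $S/A$, and $A+(g)=B+(g)$, then $A=B$) is precisely the detail needed to make the descent $\ini(J,f_1,\ldots,f_{r-1})=\ini(J)+(\ini(f_1),\ldots,\ini(f_{r-1}))$ work in the converse direction, a point the paper's one-sentence proof leaves to the reader.
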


\begin{proof}
This follows from Proposition \ref{prop:regular} by induction on $r$.
\end{proof}

The $G$-nice condition  is also connected to the distributivity property in the lattice of ideals of $S$, as the following result shows.

\begin{Proposition}
\label{prop:distributivity}
Let $J,E$ and $E'$ be ideals in the polynomial ring $S$ such that $(J,E)$ and $(J,E')$ are $G$-nice pairs. 
The following conditions are equivalent:
\begin{enumerate}
\item[(a)] $(J+E)\cap (J+E')=J + (E\cap E')$ and $(J,E\cap E')$ is a $G$-nice pair of ideals;
\item[(b)] $\ini((J+E)\cap (J+E')) = \ini(J) + \ini(E\cap E')$.
\end{enumerate}
\end{Proposition}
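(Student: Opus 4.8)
The plan is to introduce the shorthand $L = (J+E)\cap(J+E')$ and $M = J + (E\cap E')$, and then to squeeze the relevant initial ideals between two monomial ideals that we can compute explicitly. First I would record the elementary inclusion $M\subseteq L$, which is immediate from $E\cap E'\subseteq E$ and $E\cap E'\subseteq E'$, together with the general facts that $\ini$ is monotone, that $\ini(N_1)+\ini(N_2)\subseteq \ini(N_1+N_2)$, and that $\ini(N_1\cap N_2)\subseteq \ini(N_1)\cap\ini(N_2)$ for arbitrary ideals $N_1,N_2$.

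The central step is to establish the chain of inclusions
\[
\ini(J)+\ini(E\cap E')\ \subseteq\ \ini(M)\ \subseteq\ \ini(L)\ \subseteq\ \ini(J)+\bigl(\ini(E)\cap\ini(E')\bigr).
\]
The leftmost inclusion holds because both $J$ and $E\cap E'$ sit inside $M$; the middle one is just $M\subseteq L$. For the rightmost inclusion I would start from $\ini(L)\subseteq \ini(J+E)\cap\ini(J+E')$, then use the $G$-niceness of $(J,E)$ and of $(J,E')$ via Theorem~\ref{thm:main}(a) to rewrite the right-hand side as $(\ini(J)+\ini(E))\cap(\ini(J)+\ini(E'))$, and finally collapse this to $\ini(J)+(\ini(E)\cap\ini(E'))$ by the distributivity of the lattice of monomial ideals. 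That distributivity is the one external ingredient, and it is elementary: it suffices to test monomial membership, and a monomial lies in a sum of monomial ideals exactly when it lies in one of the summands.

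With the chain in place I would translate each condition into the assertion that one of its inclusions is an equality. The pair $(J,E\cap E')$ is $G$-nice precisely when $\ini(M)=\ini(J)+\ini(E\cap E')$, i.e.\ the leftmost inclusion is an equality. The equality $M=L$ is equivalent to $\ini(M)=\ini(L)$, i.e.\ the middle inclusion is an equality, using the standard principle that two ideals with one contained in the other and having equal initial ideals must coincide (reduce an element of $L$ modulo a Gr\"obner basis of $M$; the remainder lies in $L$ yet has no leading term in $\ini(L)=\ini(M)$, so it is zero). Hence condition (a) says exactly that both the leftmost and the middle inclusions are equalities. Condition (b) reads $\ini(J)+\ini(E\cap E')=\ini(L)$; but in the chain these are the first and third terms with $\ini(M)$ squeezed between them, so this single equality forces both the leftmost and middle inclusions to be equalities, and conversely. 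This gives (a) $\iff$ (b).

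I do not anticipate a serious obstacle: the substance lies entirely in setting up the four-term chain correctly and in the two routine lemmas (distributivity of monomial ideals, and ``contained ideals with equal initial ideals are equal''). The only point demanding care is the bookkeeping that the two inclusions isolated by (a) and by (b) are literally the same pair of inclusions, so that the squeeze argument delivers a genuine equivalence rather than merely a one-way implication.
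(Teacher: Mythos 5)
Your proposal is correct and takes essentially the same route as the paper: the paper's proof is exactly the squeeze on the three-term chain $\ini(J)+\ini(E\cap E')\subseteq \ini(J+(E\cap E'))\subseteq \ini((J+E)\cap(J+E'))$, with (b) forcing both inclusions to be equalities, the first giving $G$-niceness of $(J,E\cap E')$ and the second giving the ideal equality via the ``contained ideals with equal initial ideals coincide'' principle. The only difference is that your fourth term $\ini(J)+\bigl(\ini(E)\cap\ini(E')\bigr)$ (and with it the $G$-niceness hypotheses on $(J,E)$, $(J,E')$ and the monomial-ideal distributivity lemma) is never used in your final squeeze, so it is harmless but superfluous, just as those hypotheses go unused in the paper's proof.
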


\begin{proof}
(a) \implies (b) is straightforward.

(b) \implies (a): We denote   $I =J+E$ and $I'=J+E'$. We have that 
$$
\ini(J)+\ini(E\cap E') \subseteq \ini(J+(E\cap E')) \subseteq \ini(I\cap I')
$$ 
and thus, by (b), these inclusions are in fact equalities. In particular, $\ini(J)+\ini(E\cap E') = \ini(J+(E\cap E'))$, hence  the pair $(J,E\cap E')$ is $G$-nice. 

On the other hand, since $J+(E\cap E')\subseteq I\cap I'$ and $\ini(J+(E\cap E')) = \ini(I\cap I')$, it follows that $I\cap I'=J + (E\cap E')$.
\end{proof}

The two parts  of condition (a) in Proposition \ref{prop:distributivity} are independent, as the following example shows.

\begin{Example} \label{exm}
{\em 
\begin{enumerate}
\item 
Let $J=(x^2+y^2+z^2)$, $E=(xy,y^3+yz^2)$ and $E'=(xy,y^3+yz^2+x^2+y^2+z^2)$ be ideals in $S=K[x,y,z]$.
Then  $I=J+E=J+E'=(J,xy)$.

 On $S$ we consider the reverse lexicographic  monomial order (or revlex, for short)  with $x>y>z$.
 We have $\ini(I)=(x^2,xy,y^3)$, $\ini(E)=\ini(E')=(xy, y^3)$ and one can check with Singular (\cite{Sing}) that 
$\ini(E\cap E')=(xy,y^4)$.

Therefore, $(J,E)$ and $(J,E')$ are $G$-nice pairs of ideals, $(J+E)\cap(J+E')=J+ (E\cap E') = I$,
 but the pair $(J,E\cap E')$ is not $G$-nice.

\item  In $S=K[x,y,z,t]$ let $J=(x^4 + y^3 + z^2, xy^3 - t^2)$, 
$E=(-y^2zt^2 + xz^2 + t^2, x^2yz^2 - zt^4 + xyt^2, x^2zt^4 + y^4z^2 - x^3yt^2 + yz^4, yzt^6 - xy^2t^4 - x^3z^3 - x^2zt^2, -zt^8 - y^5z^3 + xyt^6 - y^2z^5 - y^3z^2 + x^3t^2 - z^4)$
 and $E'=(xz^5 - yzt^2,  y^4zt^2 - z^5t^2, y^3z^5 + z^7 + x^3yzt^2, y^2z^5t^2 + y^3z^3t^2 + x^3zt^4, -z^9t^2 - yz^7t^2 - x^3y^2zt^4)$.  

We set $I=J+E$ and $I'=J+E'$. 

We claim that the inclusion $J+(E\cap E')\subset I\cap I'$  is strict. 
Indeed, considering the   reverse lexicographic order on $S$ with $x>y>z>t$, one can check with Singular (\cite{Sing}) that  $y^2z^6t^2 \in \ini(I\cap I')$, but $y^2z^6t^2 \notin \ini(J + (E\cap E'))$. However, one can verify that the pair $(J,E\cap E')$ is $G$-nice.
\end{enumerate}
}
\end{Example}

The following result  is a dual form of Proposition \ref{prop:distributivity}.

\begin{Proposition}
Let $J,E$ and $E'$ be   ideals in the polynomial ring $S$ such that the pairs $(J,E)$ and $(J,E')$ are $G$-nice. 
The following conditions are equivalent:
\begin{enumerate}
\item[(a)] $J \cap E + J \cap E'=J \cap (E + E')$ and the pair $(J,E+E')$ is $G$-nice.
\item[(b)] $\ini(J \cap E + J \cap E') = \ini(J) \cap \ini(E + E')$.
\end{enumerate}
\end{Proposition}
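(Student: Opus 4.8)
The plan is to reproduce the proof of Proposition \ref{prop:distributivity} in dual form, interchanging the roles of sums and intersections of ideals throughout. Two elementary facts drive the argument. First, for any ideals $A,B\subseteq S$ one has the inclusion $\ini(A\cap B)\subseteq\ini(A)\cap\ini(B)$, valid for every monomial order. Second, if $A\subseteq B$ are ideals with $\ini(A)=\ini(B)$, then $A=B$, which is the usual consequence of the well-ordering property of a monomial order. Finally, by Theorem \ref{thm:main}(d) the pair $(J,E+E')$ is $G$-nice if and only if $\ini(J\cap(E+E'))=\ini(J)\cap\ini(E+E')$, and it is this equivalent form of $G$-niceness that I will use.

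For (a) \implies (b): starting from the equality $J\cap E+J\cap E'=J\cap(E+E')$ in (a), I pass to leading-term ideals to obtain $\ini(J\cap E+J\cap E')=\ini(J\cap(E+E'))$. The second half of (a) says that $(J,E+E')$ is $G$-nice, so by Theorem \ref{thm:main}(d) the right-hand side equals $\ini(J)\cap\ini(E+E')$. Chaining these two identities gives exactly (b).

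For (b) \implies (a): I first record the containment $J\cap E+J\cap E'\subseteq J\cap(E+E')$, which holds because each of $J\cap E$ and $J\cap E'$ already lies in $J\cap(E+E')$. Taking leading terms and then appending the always-valid inclusion for intersections yields
\[
\ini(J\cap E+J\cap E')\ \subseteq\ \ini(J\cap(E+E'))\ \subseteq\ \ini(J)\cap\ini(E+E').
\]
By hypothesis (b) the two outermost ideals coincide, so both inclusions are equalities. The equality of the two rightmost terms is precisely the criterion of Theorem \ref{thm:main}(d), whence $(J,E+E')$ is $G$-nice. The equality of the two leftmost terms, combined with the containment recorded above and the cancellation fact, forces $J\cap E+J\cap E'=J\cap(E+E')$. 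This establishes both clauses of (a).

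I do not anticipate a substantive obstacle: the argument is a formal dual of Proposition \ref{prop:distributivity}. The only points requiring attention are orienting the inclusions correctly and invoking condition (d) of Theorem \ref{thm:main} (rather than condition (a)), so that intersections, and not sums, appear in the relevant identities. As in Proposition \ref{prop:distributivity}, the standing hypotheses that $(J,E)$ and $(J,E')$ are themselves $G$-nice do not actually enter the equivalence; they merely record the natural setting in which the statement is of interest.
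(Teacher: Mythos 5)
Your proof is correct and follows essentially the same route as the paper: both directions rest on the chain $\ini(J\cap E+J\cap E')\subseteq\ini(J\cap(E+E'))\subseteq\ini(J)\cap\ini(E+E')$, collapsed to equalities by hypothesis (b), together with Theorem~\ref{thm:main}(d) and the standard fact that an inclusion of ideals with equal initial ideals is an equality. Your closing observation that the $G$-niceness of $(J,E)$ and $(J,E')$ is never used also matches the paper, whose proof likewise makes no appeal to those hypotheses.
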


\begin{proof}
(a) \implies (b): We have that $\ini(J\cap E + J\cap E') = \ini(J\cap(E+E'))=\ini(J)\cap \ini(E+E')$, since the pair $(J,E+E')$ is $G$-nice.

(b) \implies (a): We have that 
$$
\ini(J)\cap \ini(E + E') = \ini(J \cap E + J \cap E') \subseteq \ini(J\cap (E+E')) \subseteq \ini(J)\cap \ini(E+E'),
$$
hence the inequalities in this chain become equalities. 
It follows that $\ini(J\cap (E+E'))=\ini(J) \cap \ini(E+E')$ and thus, by Theorem \ref{thm:main}(d), the pair $(J,E+E')$ is $G$-nice. 
Also, $\ini(J \cap E + J \cap E')=\ini(J\cap (E+E'))$, therefore $J \cap E + J \cap E'=J\cap (E+E')$.
\end{proof}

Given $E$ a monomial ideal, we denote by $G(E)$ its unique minimal set of monomial generators. Clearly, $G(E)\in \Gr(E)$ 
for any monomial order.

\begin{Proposition}
\label{intersections-monomials}
Let $J$ be any ideal in the polynomial ring $S$ and let $(E_i)_{i\in \Lambda}$ be a family of monomial ideals in $S$
such that the pair $(J,E_i)$ is  $G$-nice for all $i\in\Lambda$. 
We set $I_i =J+E_i$  for all $i\in \Lambda$, $I=\bigcap_{i\in\Lambda}I_i$ and $E=\bigcap_{i\in \Lambda}E_i$.
Then $(J,E)$ is a $G$-nice pair and $I=J+E$.

Also, if $\mathcal G_J\in \Gr(J)$ then $\mathcal G_J \cup G(E)$ is a Gr\"obner basis of $I$.
\end{Proposition}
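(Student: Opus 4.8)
The plan is to build a chain of inclusions among several ideals, all of which must collapse to equalities once the two extreme ends are shown to coincide. The engine driving the argument is that $\ini(J)$ and each $E_i$ (and hence $E=\bigcap_{i\in\Lambda}E_i$) are monomial ideals, and that the lattice of monomial ideals in $S$ is distributive.

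First I would record the two elementary inclusions valid for arbitrary ideals. Since $E\subseteq E_i$ gives $J+E\subseteq J+E_i=I_i$ for every $i$, we get $J+E\subseteq I$, whence $\ini(J+E)\subseteq\ini(I)$. On the other hand, $I=\bigcap_{i}I_i\subseteq I_i$, so passing to leading monomials yields $\ini(I)\subseteq\bigcap_{i}\ini(I_i)$. I would then feed in the hypothesis that each pair $(J,E_i)$ is $G$-nice: by Theorem~\ref{thm:main} we have $\ini(I_i)=\ini(J)+\ini(E_i)=\ini(J)+E_i$, using $\ini(E_i)=E_i$. This upgrades the previous bound to $\ini(I)\subseteq\bigcap_{i}\bigl(\ini(J)+E_i\bigr)$.

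The key step is to invoke distributivity of monomial ideals: since $\ini(J)$ and all $E_i$ are monomial, $\bigcap_{i}\bigl(\ini(J)+E_i\bigr)=\ini(J)+\bigcap_{i}E_i=\ini(J)+E$. Combining this with the trivial inclusion $\ini(J)+\ini(E)\subseteq\ini(J+E)$ (valid because $J,E\subseteq J+E$) and $\ini(E)=E$ produces the chain
$$\ini(J)+E=\ini(J)+\ini(E)\subseteq\ini(J+E)\subseteq\ini(I)\subseteq\bigcap_{i}\bigl(\ini(J)+E_i\bigr)=\ini(J)+E,$$
forcing all inclusions to be equalities. In particular $\ini(J+E)=\ini(J)+\ini(E)$, so $(J,E)$ is $G$-nice by Theorem~\ref{thm:main}; and $\ini(J+E)=\ini(I)$, which together with $J+E\subseteq I$ yields $J+E=I$ by the standard fact that an inclusion of ideals with equal initial ideals is an equality. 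For the final assertion I would simply note that $G(E)\in\Gr(E)$ for any monomial order, so the $G$-niceness of $(J,E)$ and Theorem~\ref{thm:main} give $\mathcal G_J\cup G(E)\in\Gr(J+E)=\Gr(I)$ for any $\mathcal G_J\in\Gr(J)$.

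I expect the only genuine content to lie in the distributivity step; everything else is formal bookkeeping with initial ideals. The point deserving care is justifying $\bigcap_{i}\bigl(\ini(J)+E_i\bigr)=\ini(J)+E$ for a possibly infinite family $\Lambda$. This follows because monomial ideals correspond to divisibility-closed sets of monomials, with sum and intersection realized as union and intersection of those sets, and such sets form a distributive lattice; equivalently, one verifies the identity monomial by monomial, a monomial lying in every $\ini(J)+E_i$ being either divisible by a generator of $\ini(J)$ or divisible by a generator of each $E_i$ and hence lying in $E$.
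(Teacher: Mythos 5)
Your proof is correct and follows essentially the same route as the paper's: bound $\ini(J+E)$ (via $\ini(I)$) by $\bigcap_{i}\bigl(\ini(J)+E_i\bigr)$ using $G$-niceness of each $(J,E_i)$, collapse this to $\ini(J)+E$ by distributivity of the lattice of monomial ideals, and conclude both $G$-niceness of $(J,E)$ and $I=J+E$ from the resulting chain of equalities. Your explicit monomial-by-monomial justification of the distributivity step for an infinite family is a welcome detail that the paper leaves implicit.
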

 
\begin{proof}
Since the $E_i$'s are monomial ideals  then $E$ is  a monomial ideal, too. 
We have $\ini(E_i)=E_i$  for all $i\in \Lambda$ and $\ini(E)=E$.
Obviously, $\ini(J)+E \subseteq \ini(J+E)$. On the other hand, $\ini(J+E) \subseteq \bigcap_{i\in\Lambda} \ini(J+E_i) = 
\bigcap_{i\in\Lambda}(\ini(J)+E_i) = \ini(J) + \bigcap_{i\in \Lambda} E_i = \ini(J)+E$. 
Thus, the pair $(J,E)$ is $G$-nice.

Since $J+E\subseteq I$ and $\ini(J+E)=\ini(I)$, it follows that $I=J+E$. The last assertion follows immediately.
\end{proof}

The following proposition shows that the $G$-nice property behaves well with respect to taking  sums of ideals.
For any positive integer $m$ we denote $[m]=\{1,\dots, m\}$.

\begin{Proposition}
\label{Gnice-sum}
Let $E_1,\dots, E_m$ be ideals in $S$ such that $(E_i, E_j)$ is a $G$-nice pair for all $1\leq i,j \leq m$. 
Let $X\subset [m]$. We denote $E_X=\sum_{i\in X} E_i$ and $E_{X^c}=\sum_{j\in [m]\setminus X} E_j$.
Then $(E_X, E_{X^c})$ is a $G$-nice pair of ideals.
\end{Proposition}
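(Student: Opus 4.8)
The plan is to reduce the whole statement to Buchberger's criterion and then invoke condition (c) of Theorem~\ref{thm:main}. First I fix, for each $i\in[m]$, a Gr\"obner basis $\mathcal G_i\in\Gr(E_i)$, and I set $\mathcal G=\bigcup_{i=1}^m\mathcal G_i$, $\mathcal G_X=\bigcup_{i\in X}\mathcal G_i$ and $\mathcal G_{X^c}=\bigcup_{j\in[m]\setminus X}\mathcal G_j$. The auxiliary claim I would establish is that for \emph{every} subset $Y\subseteq[m]$ the union $\bigcup_{i\in Y}\mathcal G_i$ is a Gr\"obner basis of $\sum_{i\in Y}E_i$. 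Granting this claim, I get $\mathcal G_X\in\Gr(E_X)$ and $\mathcal G_{X^c}\in\Gr(E_{X^c})$, while $\mathcal G_X\cup\mathcal G_{X^c}=\mathcal G\in\Gr(E_X+E_{X^c})$ (the case $Y=[m]$). Then condition (c) of Theorem~\ref{thm:main} immediately yields that $(E_X,E_{X^c})$ is a $G$-nice pair, which is what we want.

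To prove the claim I would verify Buchberger's criterion for the set $\bigcup_{i\in Y}\mathcal G_i$, which clearly generates $\sum_{i\in Y}E_i$: it suffices to show that $S(f,g)$ reduces to zero with respect to this set for all pairs $f,g$ in it. Exactly two cases occur. If $f,g\in\mathcal G_i$ for a single $i\in Y$, then $S(f,g)$ reduces to zero with respect to $\mathcal G_i$, since $\mathcal G_i$ is a Gr\"obner basis of $E_i$. If instead $f\in\mathcal G_i$ and $g\in\mathcal G_j$ with $i\neq j$, then because $(E_i,E_j)$ is $G$-nice, condition (b) of Theorem~\ref{thm:main} shows that $\mathcal G_i\cup\mathcal G_j$ is a Gr\"obner basis of $E_i+E_j$; hence $S(f,g)$ reduces to zero with respect to $\mathcal G_i\cup\mathcal G_j$. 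In both cases the reduction uses only a subset of $\bigcup_{i\in Y}\mathcal G_i$.

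The one point requiring care is the passage from reduction modulo a subset to reduction modulo the whole set: if $S(f,g)$ reduces to zero with respect to a set $G'$, then it reduces to zero with respect to any $G''\supseteq G'$, since every reduction step available over $G'$ remains available over $G''$. Applying this with $G'=\mathcal G_i$ (resp. $G'=\mathcal G_i\cup\mathcal G_j$) and $G''=\bigcup_{i\in Y}\mathcal G_i$ shows that every $S$-polynomial reduces to zero with respect to $\bigcup_{i\in Y}\mathcal G_i$, so Buchberger's criterion applies and the claim follows. I expect the genuine content of the argument to be the realization that Buchberger's criterion localizes every obstruction to a \emph{pair} of indices, which is precisely why the pairwise $G$-nice hypothesis is enough; the superset-reduction step and the bookkeeping of the two cases are then the only technical care needed, and everything else is formal.

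\begin{proof}
For each $i\in[m]$ fix a Gr\"obner basis $\mathcal G_i\in\Gr(E_i)$. We first prove that for every subset $Y\subseteq[m]$ the set $\bigcup_{i\in Y}\mathcal G_i$ is a Gr\"obner basis of $\sum_{i\in Y}E_i$. This set generates $\sum_{i\in Y}E_i$, so by Buchberger's criterion it is enough to show that $S(f,g)$ reduces to zero with respect to $\bigcup_{i\in Y}\mathcal G_i$ for all $f,g$ in it. If $f,g\in\mathcal G_i$ for some $i\in Y$, then $S(f,g)$ reduces to zero with respect to $\mathcal G_i$, as $\mathcal G_i\in\Gr(E_i)$. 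If $f\in\mathcal G_i$ and $g\in\mathcal G_j$ with $i\neq j$, then since $(E_i,E_j)$ is $G$-nice, Theorem~\ref{thm:main}(b) gives $\mathcal G_i\cup\mathcal G_j\in\Gr(E_i+E_j)$, hence $S(f,g)$ reduces to zero with respect to $\mathcal G_i\cup\mathcal G_j$. Since a reduction to zero with respect to a set stays valid with respect to any larger set, in both cases $S(f,g)$ reduces to zero with respect to $\bigcup_{i\in Y}\mathcal G_i$, proving the assertion.

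Applying this with $Y=X$, $Y=[m]\setminus X$ and $Y=[m]$, we obtain $\bigcup_{i\in X}\mathcal G_i\in\Gr(E_X)$, $\bigcup_{j\in[m]\setminus X}\mathcal G_j\in\Gr(E_{X^c})$ and $\bigcup_{i=1}^m\mathcal G_i\in\Gr(E_X+E_{X^c})$. As $\bigl(\bigcup_{i\in X}\mathcal G_i\bigr)\cup\bigl(\bigcup_{j\in[m]\setminus X}\mathcal G_j\bigr)=\bigcup_{i=1}^m\mathcal G_i$, condition (c) of Theorem~\ref{thm:main} shows that $(E_X,E_{X^c})$ is a $G$-nice pair.
\end{proof}
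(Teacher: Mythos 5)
Your proof is correct and follows essentially the same route as the paper's: both establish, via Buchberger's criterion and the pairwise $G$-nice hypothesis applied through Theorem~\ref{thm:main}, that $\bigcup_{i\in Y}\mathcal G_i$ is a Gr\"obner basis of $\sum_{i\in Y}E_i$ for every $Y\subseteq[m]$, and then conclude with condition (c) of Theorem~\ref{thm:main}. Your write-up is in fact slightly more careful than the paper's, since it makes explicit both the superset-reduction step and the three instances $Y=X$, $Y=[m]\setminus X$, $Y=[m]$ needed at the end.
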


\begin{proof}
For all $i$ we pick a Gr\"obner basis $\mathcal G_i\in \Gr(E_i)$.
 We claim that $\mathcal G_Y = \bigcup_{i\in Y}\mathcal G_i$ is a Gr\"obner basis of $E_Y$, for any $Y \subseteq [m]$. 

If $|Y|=1$, there is nothing to prove. Assume $|Y|\geq 2$ and let $f,g\in \mathcal G_Y$. If $f,g\in G_{i}$, 
for some $i\in Y$, then $S(f,g)\rightarrow_{G_i}0$ and therefore $S(f,g)\rightarrow_{G_Y}0$. If $f\in G_i$ and $g\in G_j$, with $i\neq j$ in $Y$, then $S(f,g)\rightarrow_{G_i\cup G_j}0$, since $G_i\cup G_j$ is a Gr\"obner basis of $E_i+E_j$. It follows that $S(f,g)\rightarrow_{G_Y}0$.  Thus, $G_Y$ is a Gr\"obner basis of $E_Y$, which proves our claim.

  In follows that $\mathcal{G}_{[m]}=\mathcal G_X \cup \mathcal{G}_{X^c}$ is a Gr\"obner basis for $E_{[m]}= E_X + E_{X^c}$, and  therefore the pair $(E_X, E_{X^c})$ is  $G$-nice.
\end{proof}

\begin{Corollary}
If $J$ is any ideal and $(E_i)_{i\in \Lambda}$ is a family of monomial ideals, such that the pair $(J, E_i)$ is $G$-nice for all $i\in \Lambda$, then the pair $(J, \sum_{i\in\Lambda}E_i)$ is $G$-nice.
\end{Corollary}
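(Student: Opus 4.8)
The plan is to deduce the statement from Proposition~\ref{Gnice-sum}, after reducing to a finite family. Since $S$ is Noetherian, the ideal $E=\sum_{i\in\Lambda}E_i$ is finitely generated, and each of its generators involves only finitely many of the $E_i$; hence there is a finite subset $\Lambda_0\subseteq\Lambda$ with $E=\sum_{i\in\Lambda_0}E_i$. Each pair $(J,E_i)$ with $i\in\Lambda_0$ remains $G$-nice by hypothesis, so it suffices to treat the case $\Lambda=\{1,\dots,m\}$.

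In the finite case I would set $E_0=J$ and apply Proposition~\ref{Gnice-sum} to the enlarged family $E_0,E_1,\dots,E_m$ with $X=\{0\}$, for which $E_X=J$ and $E_{X^c}=\sum_{i=1}^m E_i=E$. To invoke that proposition I must verify that $(E_i,E_j)$ is $G$-nice for all $0\le i,j\le m$. There are three kinds of pairs. The pair $(E_0,E_0)=(J,J)$ is $G$-nice trivially, since condition (a) of Theorem~\ref{thm:main} then reads $\ini(J)=\ini(J)+\ini(J)$. For $i,j\ge 1$ the ideals $E_i$ and $E_j$ are monomial, so $(E_i,E_j)$ is $G$-nice. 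Finally, the cross pairs $(E_0,E_i)=(J,E_i)$ are $G$-nice by hypothesis; I would also note that the $G$-nice property is symmetric, because condition (a) of Theorem~\ref{thm:main}, namely $\ini(A+B)=\ini(A)+\ini(B)$, is symmetric in $A$ and $B$, so $(E_i,E_0)$ is $G$-nice as well. Proposition~\ref{Gnice-sum} then yields that $(J,E)$ is a $G$-nice pair.

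I do not expect a serious obstacle, as the result is essentially a repackaging of Proposition~\ref{Gnice-sum}; the only points needing care are the Noetherian reduction to a finite subfamily and the bookkeeping of the three types of pairs. For comparison, a self-contained route avoids Proposition~\ref{Gnice-sum} entirely: fixing $\mathcal G_J\in\Gr(J)$, one shows via Buchberger's criterion that $\mathcal G_J\cup G(E)$ is a Gr\"obner basis of $J+E$. The key point there is that every element of $G(E)$ lies in $G(E_j)$ for some $j$, because a monomial belongs to a sum of monomial ideals exactly when it belongs to one of them, so a minimal generator of $E$ is already a minimal generator of some $E_j$. Consequently each $S$-polynomial $S(f,g)$ with $f\in\mathcal G_J$ and $g\in G(E)$ admits a standard representation over $\mathcal G_J\cup G(E_j)$, which is a Gr\"obner basis of $J+E_j$ since $(J,E_j)$ is $G$-nice, and this representation transfers to $\mathcal G_J\cup G(E)$; one then reads off $\ini(J+E)=\ini(J)+E=\ini(J)+\ini(E)$.
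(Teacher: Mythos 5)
Your proof is correct and follows essentially the same route as the paper: reduce to a finite subfamily by Noetherianity, note that any two monomial ideals form a $G$-nice pair (and that $(J,E_i)$ is $G$-nice by hypothesis), and then apply Proposition~\ref{Gnice-sum} with $J$ adjoined to the family. Your write-up merely makes explicit the bookkeeping (symmetry of the $G$-nice condition, the diagonal pairs) that the paper leaves implicit.
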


\begin{proof}
Note that $\sum_{i\in \Lambda}  E_i$ can be written as the sum of finitely many terms in  the sum.
  On the other hand, any two monomial ideals form a  $G$-nice pair,  so the conclusion follows by  Proposition~\ref{Gnice-sum}.
\end{proof}

\section{Creating Gr\"obner-nice pairs}
\label{sec2}

Let $J$ be an ideal in $S$. Given any ideal $E\subset S$ such that $(J,E)$ is not a $G$-nice pair, we are interested in finding ideals $F$ in $S$, ``close" to $E$, such that
\begin{equation}
\label{eq:pair}
J+F=J+E \text{ and } (J,F) \text{  is a $G$-nice pair}.
\end{equation}

\begin{Lemma}\label{smth}
If $(J,F)$ is a $G$-nice pair with $E\supseteq F$ so that $J+E=J+F$, then  $(J,E)$ is a $G$-nice pair.
\end{Lemma}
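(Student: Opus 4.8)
The plan is to use the chain of inclusions among leading-term ideals together with the monotonicity of $\ini(\cdot)$ under ideal inclusion. Since $F\subseteq E$, we immediately have $\ini(F)\subseteq\ini(E)$, and since $J+F=J+E=:I$, the two sums share the same leading-term ideal $\ini(I)$. The goal, by Theorem~\ref{thm:main}(a), is to show $\ini(I)=\ini(J)+\ini(E)$, knowing the corresponding statement for $F$.

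First I would record the standing inclusions. Because $(J,F)$ is $G$-nice, Theorem~\ref{thm:main}(a) gives $\ini(I)=\ini(J+F)=\ini(J)+\ini(F)$. On the other hand, $\ini(J)+\ini(F)\subseteq\ini(J)+\ini(E)$ since $F\subseteq E$ forces $\ini(F)\subseteq\ini(E)$. Finally, the containment $\ini(J)+\ini(E)\subseteq\ini(J+E)=\ini(I)$ always holds, for any pair of ideals. Chaining these together produces
\[
\ini(I)=\ini(J)+\ini(F)\subseteq\ini(J)+\ini(E)\subseteq\ini(I),
\]
so every inclusion is forced to be an equality. In particular $\ini(J+E)=\ini(J)+\ini(E)$, which is exactly condition (a) of Theorem~\ref{thm:main} for the pair $(J,E)$. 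Hence $(J,E)$ is $G$-nice.

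I do not expect a serious obstacle here; the argument is a short sandwich of inclusions, and the only facts it relies on are the monotonicity $F\subseteq E\Rightarrow\ini(F)\subseteq\ini(E)$, the trivial inclusion $\ini(A)+\ini(B)\subseteq\ini(A+B)$, and the equivalence of (a) with the $G$-nice property supplied by Theorem~\ref{thm:main}. The one point deserving a moment of care is making explicit that $J+E$ and $J+F$ are literally the same ideal $I$, so that their leading-term ideals coincide; this is given by hypothesis, and once it is invoked the rest is purely formal.
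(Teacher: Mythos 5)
Your proof is correct and follows essentially the same route as the paper's: both establish the sandwich $\ini(J+E)=\ini(J+F)=\ini(J)+\ini(F)\subseteq\ini(J)+\ini(E)\subseteq\ini(J+E)$ and conclude all inclusions are equalities. The only difference is that you spell out the trivial inclusion $\ini(J)+\ini(E)\subseteq\ini(J+E)$, which the paper leaves implicit.
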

\begin{proof}
We have $\ini(J+E)=\ini(J+F)=\ini (J)+ \ini(F)\subseteq \ini (J)+\ini(E)$, hence $\ini(J+E)=\ini(J)+\ini(E)$ and $(J,E)$ is a $G$-nice pair.
\end{proof}

Based on Lemma~\ref{smth}, for \eqref{eq:pair} we should look at ideals $F \supseteq E$.
 In general, $(J, J+E)$ is a $G$-nice pair, so in the worst case we may take $F=J+E$. But sometimes, it is also the best choice, as the following example shows.

\begin{Example}
{\em
Let $J=(x^2-y^2)$ and $E=(x^2)$. We consider on $S=K[x,y]$ the revlex order with $x>y$. Let $I=J+E$. 
Then $\ini(I)=I=(x^2, y^2)$. Let $F \supseteq E$ be an ideal with $J+ F=I$ and $\ini(I)=\ini(J) +\ini(F)$. We claim that $F=I$.

Indeed, since $y^2 \in \ini(F)$, there exists $f\in F$ with $\LT(f)=y^2$. It follows that $f=y^2+ax+by+c$, where $a,b,c \in K$. Since $y^2, f\in I$, we get  that $ax+by+c \in I$, and therefore $a=b=c=0$. Thus $y^2 \in F=I$.
}
\end{Example}

\begin{Remark}
{\em
For $J$ and $E$ ideals in $S$, assume the ideal $F$ satisfies \eqref{eq:pair} and $F\supseteq E$. In order to find an ideal $E'\subseteq S$ such that $E\subseteq E' \subseteq F$  and $(J,E')$ is a $G$-nice pair, where $E'$ is as small as possible, a natural approach is the following. Set $I=J+E$. We write
$$
\ini(I)=\ini(J)+\ini(E)+ (m_1,\dots, m_s),
$$
where $m_1,\dots, m_s$ are the monomials in $G(\ini(I))\setminus (\ini(J)+\ini(E))$. Since $\ini(I)=\ini(J+F)=\ini(J)+\ini(F)$, it follows that $m_1,\dots, m_s \in \ini(F)$. We choose $g_1,\dots, g_s  \in F$ such that  $\ini(g_i)= m_i$ for $1\leq i\leq s$.
   Let $E_1=E+(g_1,\dots, g_s)$. Then $J+E_1=J+E=I$ and 
$$
\ini(I)=\ini(J)+\ini(E)+(m_1,\dots, m_s) \subseteq \ini(J)+\ini(E_1)\subseteq \ini(J+E_1)=\ini(I).
$$
Thus $(J,E_1)$ is a $G$-nice pair of ideals. 
}
\end{Remark}
 
\medskip

The following example shows that given the ideal $I$ there does not always exist 
a minimal ideal $E'$ (eventually containing $E$) such that $(J, E')$ is a $G$-nice pair with $I=J+ E'$.

\begin{Example}\label{exm:infinite}{\em
(1) We consider the lexicographic order on $S=K[x,y]$ induced by $x>y$. Let $J=(y^3)\subset I=(y^3, x^2-y^2, xy)$. 
Then $\ini(I)=(y^3, x^2, xy)$. 
We define the sequence $(\alpha_k)_{k\geq 1}$  by $\alpha_1=5$ and $\alpha_{k+1}=3\alpha_k -4$ for $k\geq 1$.
Let $g_k=xy-y^{\alpha_k}$ and $E_k=(x^2-y^2, xy-y^{\alpha_k})$. One can easily check that $J+E_k=I$, and $\ini(E_k)=(x^2, xy, y^{2\alpha_k-1})$. Therefore, $(J, E_k)$ is a $G$-nice pair of ideals for all $k$.

Since  $xg_k-y(x^2-y^2)=y^3-xy^{\alpha_k}=y^3-xy g_k-y^{2 \alpha_k -1}$, it follows that $y^3-y^{2\alpha_k-1} \in E_k$. 
Then $g_{k+1}=g_k+y^{\alpha_k}-y^{3\alpha_k -4}=g_k+y^{\alpha_k-3}(y^3-y^{2\alpha_k-1})$. Hence $E_k \subsetneq E_{k+1}$ for all $k$. We also note that $\sect_{k\geq 1} E_k =(x^2-y^2)$, $(J, (x^2-y^2))$ is a $G$-nice pair of ideals, and $J+(x^2-y^2)\subsetneq I$.

(2) In $S=K[x,y,z]$ ordered lexicographically (with $x>y>x$) we let $J=(x^2-y^2, z^2)$, $E=(xy)$, $F=(xy, y^3, z^2)$. Denote   $I=J+E=J+F=(x^2-y^2, z^2, xy)$. We note that $\ini(I)=(x^2, xy, y^3, z^2)=\ini (J)+\ini(F)$. Therefore, $(J,E)$ is not a $G$-nice pair, while $(J,F)$  is one.  We set $E_k=(xy, y^3+z^{k+1}+z^k+\dots+ z^2, z^{k+2})$. Then $E\subsetneq E_{k+1} \subsetneq E_k \subsetneq F$, while $(J, E_k)$  is a $G$-nice pair for all $k\geq 1$.
}
\end{Example}

We recall the definition of a normal form, see \cite{sing-book}.
\begin{Definition}
Consider the ideal $J\subset S$ and $\mathcal G_J\in \Gr(J)$. A \emph{normal form} with respect to $\mathcal G_J$ is a map $NF(-|\mathcal G_J):S \rightarrow S$, which satisfies the following conditions:
\begin{enumerate}
\item[(i)] $NF(0|\mathcal G_J)=0$;
\item[(ii)] if $NF(g|\mathcal G_J)\neq 0$ then $\ini(NF(g|\mathcal G_J))\notin \ini(J)$;
\item[(iii)] $g-NF(g|\mathcal G_J)=\sum_{f\in \mathcal G_J} c_f f$, where $c_f\in S$ and $\ini(g)\geq \ini(c_f f)$, for all $f\in\mathcal G_J$ with $c_f\neq 0$.
\end{enumerate}
Moreover, $NF$ is called a \emph{reduced normal form}, if for any $f\in S$ no monomial of $NF(f|\mathcal G_J)$ is contained in $\ini(\mathcal G_J)$.
\end{Definition}

Given the ideals $J,E$ in $S$, $\mathcal G_J\in \Gr(J)$ and  $NF(-|\mathcal G_J)$  a normal form with respect to $\mathcal G_J$,
we denote $NF(E|\mathcal G_J)=(NF(g|\mathcal G_J):\; g\in E)$.
  
\begin{Proposition}
With notation as above, we have:
\begin{enumerate}
\item [(i)] $J+E=J+NF(E|\mathcal G_J)$;
\item [(ii)] $(J, NF(E|\mathcal G_J))$ is a $G$-nice pair of ideals;
\item [(iii)] if $NF$ is a reduced normal form and $E=NF(E|\mathcal G_J)$, then any $h\in J+E$ can be written as $h=f+g$, such that $f\in J$, $g\in E$ and no monomial of $g$ is contained in $\ini(J)$;
\item [(iv)] if $(E_i)_{i\in\Lambda}$ is a family of ideals with $E_i=NF(E_i|\mathcal G_J)$ for all $i \in 
\Lambda$, then 
{$$NF\left(\bigcap_{i\in \Lambda}E_i|\mathcal G_J \right) \subseteq   \bigcap_{i\in\Lambda}E_i.$$}
\end{enumerate}
\end{Proposition}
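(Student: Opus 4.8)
The plan is to isolate a single structural property of normal forms modulo a Gr\"obner basis and then read off all four items from it. The property I would record first is: if $h\equiv h'\pmod J$, then $NF(h|\mathcal G_J)$ and $NF(h'|\mathcal G_J)$ are either both zero or both nonzero with the same leading monomial. This is exactly where being a Gr\"obner basis (and not merely a generating set) is essential. By condition (iii) the difference $NF(h|\mathcal G_J)-NF(h'|\mathcal G_J)$ lies in $J$, so if it were nonzero its leading monomial would lie in $\ini(J)$; but if the two leading monomials were distinct, that leading monomial would equal $\ini(NF(h|\mathcal G_J))$ or $\ini(NF(h'|\mathcal G_J))$, which avoids $\ini(J)$ by condition (ii), a contradiction. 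The same comparison gives two further facts I would use: $NF(h|\mathcal G_J)=0$ exactly when $h\in J$, and $\ini(NF(h|\mathcal G_J))=\ini(h)$ whenever $\ini(h)\notin\ini(J)$, the inequality $\ini(NF(h|\mathcal G_J))\le\ini(h)$ being immediate from condition (iii).

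For (i) I would simply note that $g-NF(g|\mathcal G_J)\in J$ for every $g$, so each $g\in E$ lies in $J+NF(E|\mathcal G_J)$, while conversely each generator $NF(g|\mathcal G_J)=g-(g-NF(g|\mathcal G_J))$ lies in $E+J$; hence $J+E=J+NF(E|\mathcal G_J)$. For (ii), set $E'=NF(E|\mathcal G_J)$, so $J+E'=J+E=:I$ by (i). As $J,E'\subseteq I$, it suffices to prove $\ini(I)\subseteq\ini(J)+\ini(E')$ and then invoke Theorem~\ref{thm:main}. Given a nonzero $h\in I$ with $\ini(h)\notin\ini(J)$, I would write $h=a+b$ with $a\in J$ and $b\in E$; then $h\equiv b\pmod J$, so by the observation above $\ini(NF(b|\mathcal G_J))=\ini(NF(h|\mathcal G_J))=\ini(h)$. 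Since $NF(b|\mathcal G_J)$ is by definition a generator of $E'$, its leading monomial $\ini(h)$ lies in $\ini(E')$. Monomials $\ini(h)\in\ini(J)$ are trivially handled, so the inclusion follows.

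For (iii) I would take $g=NF(h|\mathcal G_J)$ for the given reduced normal form and $f=h-g\in J$; reducedness guarantees that no monomial of $g$ lies in $\ini(J)$. To see $g\in E$, write $h=a+b$ with $a\in J$ and $b\in E$; the reduced normal form is unique on each residue class modulo $J$, so $g=NF(b|\mathcal G_J)$, which is a generator of $NF(E|\mathcal G_J)=E$. For (iv), if $g\in\bigcap_{i\in\Lambda}E_i$ then for every $i$ we have $g\in E_i$, whence $NF(g|\mathcal G_J)$ is a generator of $NF(E_i|\mathcal G_J)=E_i$ and thus $NF(g|\mathcal G_J)\in E_i$; therefore $NF(g|\mathcal G_J)\in\bigcap_{i\in\Lambda}E_i$, and since such elements generate $NF(\bigcap_{i\in\Lambda}E_i|\mathcal G_J)$, the desired inclusion holds.

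The main obstacle is the opening observation that powers (ii): a general, non-reduced normal form is not unique, so one cannot simply assert $NF(b|\mathcal G_J)=NF(h|\mathcal G_J)$. The Gr\"obner basis hypothesis must be used precisely to pin down the leading monomial of the normal form as an invariant of the residue class modulo $J$, and to guarantee it coincides with $\ini(h)$ for $h$ whose leading monomial is standard. Once that is in place, (i), (iii) and (iv) are essentially formal, with (iii) additionally relying on the uniqueness of the reduced normal form.
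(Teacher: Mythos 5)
Your proof is correct and takes essentially the same approach as the paper: every part rests on the normal-form decomposition $h=(h-NF(h|\mathcal G_J))+NF(h|\mathcal G_J)$ together with the observation that a nonzero normal form has leading monomial outside $\ini(J)$, so its leading term cannot cancel against the leading term of an element of $J$. Your residue-class invariance lemma is just an explicit packaging of the no-cancellation argument the paper runs inline in part (ii), and your appeal to uniqueness of the reduced normal form in part (iii) supplies exactly the verification that $NF(h|\mathcal G_J)\in E$, which the paper's one-line proof leaves implicit.
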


\begin{proof}
$(i):$ If $h\in J+E$, then $h=NF(h|\mathcal G_J)+\sum_{f\in \mathcal G_J}c_f f$, and therefore $NF(h|\mathcal G_J)\in J+E$.

$(ii):$ Since, by $(i)$, $J+E=J+NF(E|\mathcal G_J)$, it is enough to consider the case $E=NF(E|\mathcal G_J)$ and to prove that
$\ini(J+E)\subseteq \ini(J)+ \ini(E)$.
Let $h\in J+E$. Then $h=j+g$ where $j\in J$ and $g\in E$. We also write $g=j_1+g_1$, where we let $g_1=NF(g|\mathcal{G}_J) \in NF(E|\mathcal{G}_J)=E$. Thus $h=(j+j_1)+ g_1$. If $g_1=0$, then $\ini(h)\in \ini J$. Otherwise, by the definition of the normal form, $\ini(g)\notin \ini(J)$ which implies $\ini(h) \in \ini(J)+ \ini(E)$.
 
$(iii):$ For any $h\in J+E$, the decomposition $h=(h-NF(h|\mathcal G_J))+ NF(h|\mathcal G_J)$ satisfies the required condition.

$(iv)$ is straightforward.

\end{proof}

Given the ideals $J,E\subseteq S$ we introduce the sets
\begin{eqnarray*}
\mathcal{E}_{J,E} &=&\{ F\subseteq S: E\subseteq F, J+E=J+F, (J,F) \text{ is a $G$-nice pair of ideals}\},\\
\mathcal{E}_{J,E}^{m} &=& \{ F\in \mathcal{E}_{J,E}: F \text{ is a monomial ideal}\}
\end{eqnarray*}

The previous discussion shows that the set $\mathcal{E}_{J,E}$ may not have a minimal element.
However, when $E$ is a monomial ideal and $\mathcal{E}_{J,E}^{m}\neq \emptyset$, then the latter set has a minimum.

\begin{Definition}\label{def:ehat}
Let $J$   be an ideal in $S$, and $E$  a monomial ideal in $S$.
The $G$-nice monomial closure of $E$ with respect to $J$ is the (monomial) ideal
$$
\widehat{E}= \bigcap_{E\subseteq F, \ F \text{ monomial ideal},\  (J,F) \text{ is } G\text{-nice}} F.
$$
\end{Definition} 

The ideal $\widehat{E}$ naturally depends on the ideal $J$, although this is not reflected in the notation. We prefer not to complicate the notation since it will be clear from the context what $J$ is.

\begin{Proposition}\label{ehat}
 Let $J$  be any ideal in $S$, and $E$  a monomial ideal in $S$.
Then $(J,\widehat{E})$ is a $G$-nice pair. Moreover, if $\mathcal{E}_{J,E}^{m}\neq \emptyset$, then $\widehat{E}$ is the smallest element in $\mathcal{E}_{J,E}^{m}$ with respect to inclusion.
\end{Proposition}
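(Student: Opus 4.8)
The plan is to split the statement into its two assertions and to reduce both to Proposition~\ref{intersections-monomials}. Let $\mathcal{F}$ denote the family of all monomial ideals $F$ in $S$ with $E\subseteq F$ and $(J,F)$ a $G$-nice pair, so that by definition $\widehat{E}=\bigcap_{F\in\mathcal F}F$. First I would note that $\mathcal F$ is nonempty, since $F=S$ belongs to it: $S$ is a monomial ideal, $E\subseteq S$, and $(J,S)$ is $G$-nice because $\ini(J+S)=\ini(S)=S=\ini(J)+S=\ini(J)+\ini(S)$.

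For the first assertion, $\widehat{E}$ is the intersection of the family $\mathcal F$, each of whose members is a monomial ideal forming a $G$-nice pair with $J$. Applying Proposition~\ref{intersections-monomials} to the family $(F)_{F\in\mathcal F}$ then yields at once that $\bigl(J,\bigcap_{F\in\mathcal F}F\bigr)=(J,\widehat{E})$ is a $G$-nice pair.

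For the second assertion, assume $\mathcal{E}_{J,E}^{m}\neq\emptyset$. The ideal $\widehat{E}$ is monomial, being an intersection of monomial ideals, and it contains $E$ since every member of $\mathcal F$ does. To see that $\widehat{E}$ is a lower bound for $\mathcal{E}_{J,E}^{m}$, observe that any $F\in\mathcal{E}_{J,E}^{m}$ is a monomial ideal with $E\subseteq F$ and $(J,F)$ $G$-nice, hence $F\in\mathcal F$ and therefore $\widehat{E}\subseteq F$. It remains to check that $\widehat{E}$ itself lies in $\mathcal{E}_{J,E}^{m}$; the only nonobvious requirement is $J+\widehat{E}=J+E$. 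The inclusion $J+E\subseteq J+\widehat{E}$ is clear from $E\subseteq\widehat{E}$. For the reverse, I would fix any $F_0\in\mathcal{E}_{J,E}^{m}$ (which exists by hypothesis); then $\widehat{E}\subseteq F_0$ by the lower-bound property just established, so $J+\widehat{E}\subseteq J+F_0=J+E$. Combined with the first assertion that $(J,\widehat{E})$ is $G$-nice, this places $\widehat{E}$ in $\mathcal{E}_{J,E}^{m}$, and being a lower bound it is the smallest element.

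No step here is a genuine obstacle; the argument is essentially a bookkeeping of inclusions once Proposition~\ref{intersections-monomials} is in hand. The one point deserving care is \emph{why} the hypothesis $\mathcal{E}_{J,E}^{m}\neq\emptyset$ cannot be dropped: it is used exactly in the final inclusion $J+\widehat{E}\subseteq J+E$. The intersection defining $\widehat{E}$ ranges over \emph{all} monomial $F\in\mathcal F$, including ideals such as $S$ for which $J+F$ strictly exceeds $J+E$, so without producing at least one witness $F_0$ satisfying $J+F_0=J+E$ there is no way to bound $J+\widehat{E}$ from above, and indeed Example~\ref{exm:infinite} shows the containment $J+\widehat{E}\subseteq J+E$ can fail in its absence.
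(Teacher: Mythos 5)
Your proof is correct and follows essentially the same route as the paper: the first assertion is obtained by applying Proposition~\ref{intersections-monomials} to the defining family of $\widehat{E}$, and the second by the chain of inclusions $J+E\subseteq J+\widehat{E}\subseteq J+F_0=J+E$ for any $F_0\in\mathcal{E}_{J,E}^{m}$. One minor correction to your closing remark: the failure of $J+\widehat{E}=J+E$ in the absence of the hypothesis is not witnessed by Example~\ref{exm:infinite} (in its part (2) the monomial ideal $F=(xy,y^3,z^2)$ lies in $\mathcal{E}_{J,E}^{m}$, so that set is nonempty there), but rather by the example closing Section~\ref{sec2}, where $J=(x^2+y^2+z^2)$, $E=(xy)$ and $\widehat{E}=(xy,y^3,yz^2)$, so that $y^3\in J+\widehat{E}$ while $y^3\notin J+E$.
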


\begin{proof}
The first part follows from Proposition~\ref{intersections-monomials}.  For the second assertion, note that if $F\in \mathcal{E}_{J,E}^{m}$ then  $J+E\subseteq J+\widehat{E} \subseteq J+F$, hence $J+E=J+\widehat{E}$, $\widehat{E} \in \mathcal{E}_{J,E}^{m}$  and it is its smallest element.
\end{proof}

\begin{Corollary}
\label{binomial-hat}
With notation as in Proposition~\ref{ehat}, if $J$ is  a  binomial ideal, then $J+E=J+\widehat{E}$.
\end{Corollary}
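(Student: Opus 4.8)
The plan is to reduce the statement, via Proposition~\ref{ehat}, to proving that the set $\mathcal{E}_{J,E}^{m}$ is nonempty. Indeed, if $\mathcal{E}_{J,E}^{m}$ contains some $F$, then by Proposition~\ref{ehat} the ideal $\widehat{E}$ is its smallest element, so in particular $\widehat{E}\in\mathcal{E}_{J,E}^{m}$ and hence $J+\widehat{E}=J+E$. Thus it suffices to exhibit one monomial ideal $F$ with $E\subseteq F$, with $J+F=J+E$, and such that $(J,F)$ is a $G$-nice pair.

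To build such an $F$, I would start from a Gr\"obner basis $\mathcal{G}_J=\{b_1,\dots,b_r\}$ of $J$ consisting of binomials, which exists precisely because $J$ is a binomial ideal, together with the minimal monomial generators $G(E)=\{u_1,\dots,u_t\}$ of $E$. The key computation is that Buchberger's algorithm applied to $\mathcal{G}_J\cup G(E)$ introduces only monomials. Writing a binomial as $b=\ini(b)-c\,m$ with $m$ a single monomial, and letting $L=\lcm(\ini(b),v)$ for a monomial $v$, one checks directly that $S(b,v)=-c\,(L/\ini(b))\,m$ is a scalar multiple of a monomial; moreover, reducing any monomial modulo a binomial again yields a monomial, and reducing a monomial modulo a monomial yields a monomial or $0$. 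Consequently every remainder that Buchberger's algorithm appends is (a scalar times) a monomial lying in $I:=J+E$, and by Dickson's Lemma the process terminates. This produces a Gr\"obner basis of $I$ of the form $\mathcal{G}_J\cup\{v_1,\dots,v_s\}$, where the $v_j$ are monomials in $I$ and $G(E)\subseteq\{v_1,\dots,v_s\}$.

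Now I would set $F=(v_1,\dots,v_s)$. Then $F$ is a monomial ideal, $E\subseteq F$ since the $u_i$ occur among the $v_j$, and each $v_j\in I$ gives $F\subseteq I$; combined with $E\subseteq F$ this forces $J+F=J+E$. Finally, since $\mathcal{G}_J\cup\{v_1,\dots,v_s\}$ is a Gr\"obner basis of $I$, its leading terms generate $\ini(I)$, so $\ini(I)=(\ini(b_1),\dots,\ini(b_r),v_1,\dots,v_s)=\ini(J)+F=\ini(J)+\ini(F)$. By Theorem~\ref{thm:main}(a) the pair $(J,F)$ is $G$-nice. Hence $F\in\mathcal{E}_{J,E}^{m}$, which completes the argument.

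The step I expect to require the most care is verifying that Buchberger's algorithm stays monomial: one must confirm both that $J$ admits a binomial Gr\"obner basis and, crucially, that every object produced along the way, namely the $S$-polynomials $S(b_k,-)$ against monomials and all intermediate reductions of the monomials that are generated, remains a single monomial up to a nonzero scalar. Once this closure property is pinned down, the remainder is straightforward bookkeeping with leading terms.
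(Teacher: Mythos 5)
Your proof is correct, and at the top level it follows the same strategy as the paper: exhibit one monomial ideal $F\supseteq E$ with $J+F=J+E$ such that $(J,F)$ is a $G$-nice pair, conclude that $\mathcal{E}_{J,E}^{m}\neq\emptyset$, and invoke Proposition~\ref{ehat}; the engine in both arguments is the observation that the $S$-polynomial of a binomial and a monomial is a scalar times a monomial. Where you genuinely differ is in how $F$ is built. The paper stops after a single round, taking $F=E+\bigl(S(b,m):\ m\in G(E),\ b\in\mathcal{G}_J\bigr)$, whereas you run Buchberger's algorithm to completion, forming $S$-polynomials against the newly created monomials as well and reducing, until nothing new appears. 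This iteration is not a stylistic choice: read literally, one round is insufficient in general. For instance, in $K[x,y,z]$ with the lexicographic order $x>y>z$, let $J=(x^2-yz)$ and $E=(x^3)$. One round gives $S(x^2-yz,x^3)=-xyz$, so $F=(x^3,xyz)$; but $y^2z^2=x\cdot xyz-yz\,(x^2-yz)\in J+F$ while $y^2z^2\notin (x^2,xyz)=\ini(J)+\ini(F)$, so $(J,F)$ is not $G$-nice, and a second round adjoining $S(x^2-yz,xyz)=-y^2z^2$ is needed. Thus your Buchberger closure supplies exactly the step the paper's terse argument leaves implicit; in fact the iterated construction you perform is the one the paper formalizes only later, in Propositions~\ref{inductive} and~\ref{proplast}, where for a binomial $\mathcal{G}_J$ the $S$-nice monomial closure $E^\sharp=\widetilde{E}$ is obtained by the same recursion. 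Two small points you should state explicitly to round off your write-up: the $S$-pairs internal to $\mathcal{G}_J$ contribute nothing because they already reduce to zero with respect to $\mathcal{G}_J\in\Gr(J)$ (this is what justifies your claim that only binomial-versus-monomial pairs matter), and termination holds because each appended remainder strictly enlarges the monomial ideal generated by the leading terms of the current basis.
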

\begin{proof}
Note that if $b$ is any binomial in $S$ and $m$ is any monomial in $S$, then their $S$-polynomial $S(b,m)$ is a monomial. Also, since $J$ is a binomial ideal, it has a Gr\"obner basis $\G_J$ consisting of binomials. Therefore, we can define the monomial ideal $F$  which extends $E$ by adding the monomials $S(b,m)$ where $m\in G(E)$ and $b \in \G_J$.  Then $F\in \mathcal{E}_{J,E}^{m}$ , and we apply Proposition~\ref{ehat}.
\end{proof}

The ideal $\widehat{E}$ can be computed as follows.

\begin{Remark}
{\em Let $J\subset S$ be an ideal, and let $E\subset S$ be a monomial ideal. 
Let $F\subset S$ be any  monomial ideal such that $(J,F)$ is a $G$-nice pair and $E\subseteq F$.  
Let $G_0$ be the minimal monomial generators of $\ini(J+E)$ which are not in $\ini(J)$ nor in $\ini(E)$. Clearly, $G_0\subset F$.
We let $E_1=E+(G_0)$. If $(J, E_1)$ is $G$-nice, then $\widehat{E}=E_1$. Else, we argue as above and we get a chain of monomial ideals 
$E_1\subseteq E_2\subseteq \cdots \subseteq F$. By notherianity, this chain stabilizes at some point $E_i=E_{i+1}=\cdots $ and we get $\widehat{E}=E_i$.}
\end{Remark}

\begin{Proposition}
Let $J$ be a binomial ideal and let $(E_i)_{i\in \Lambda}$ be a family of monomial ideals. Assume  $F_i \supseteq E_i$ are  monomial ideals such that $(J, F_i)$ is a $G$-nice pair and $J+E_i=J+F_i$, for all $i\in\Lambda$. 
Then  
$$
\bigcap_{i\in \Lambda} (J+ E_i) = J+\bigcap_{i \in \Lambda} \widehat{E}_i = J+\bigcap_{i\in \Lambda}{F_i}.
$$
\end{Proposition}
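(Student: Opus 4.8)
The plan is to reduce the entire statement to two applications of Proposition~\ref{intersections-monomials}, the key observation being that the three ideals $J+E_i$, $J+\widehat{E}_i$ and $J+F_i$ all coincide for each $i\in\Lambda$. Once that collapse is in place, both displayed equalities fall out immediately, so the real work is just verifying that the two auxiliary families of monomial ideals satisfy the hypotheses of that proposition.

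First I would record the structural facts for each $i\in\Lambda$. By construction $\widehat{E}_i$ is a monomial ideal (Definition~\ref{def:ehat}), and by the first part of Proposition~\ref{ehat} the pair $(J,\widehat{E}_i)$ is $G$-nice; by hypothesis $F_i$ is a monomial ideal and $(J,F_i)$ is $G$-nice. Next I would pin down the coincidence of sums: since $J$ is a binomial ideal, Corollary~\ref{binomial-hat} gives $J+\widehat{E}_i=J+E_i$, while $J+F_i=J+E_i$ holds by assumption. Hence $J+E_i=J+\widehat{E}_i=J+F_i$ for every $i$, which is exactly the bridge that lets me pass freely between the three families.

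Then I would apply Proposition~\ref{intersections-monomials} to the family of monomial ideals $(\widehat{E}_i)_{i\in\Lambda}$, each forming a $G$-nice pair with $J$, to get $\bigcap_{i\in\Lambda}(J+\widehat{E}_i)=J+\bigcap_{i\in\Lambda}\widehat{E}_i$; rewriting $J+\widehat{E}_i=J+E_i$ on the left turns this into $\bigcap_{i\in\Lambda}(J+E_i)=J+\bigcap_{i\in\Lambda}\widehat{E}_i$. Applying the same proposition to $(F_i)_{i\in\Lambda}$ yields $\bigcap_{i\in\Lambda}(J+F_i)=J+\bigcap_{i\in\Lambda}F_i$, and after the substitution $J+F_i=J+E_i$ this reads $\bigcap_{i\in\Lambda}(J+E_i)=J+\bigcap_{i\in\Lambda}F_i$. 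Chaining the two identities gives the asserted chain of equalities.

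There is no genuine analytic obstacle here; the content has already been packaged in the earlier results. The only points deserving attention are that the two auxiliary families $(\widehat{E}_i)$ and $(F_i)$ really do meet the hypotheses of Proposition~\ref{intersections-monomials} (monomial ideals, each $G$-nice with $J$), and that it is precisely the binomial hypothesis on $J$—via Corollary~\ref{binomial-hat}—that licenses replacing $E_i$ by the smaller $G$-nice closure $\widehat{E}_i$ inside $J+(-)$ without changing the sum. Since Proposition~\ref{intersections-monomials} is stated for an arbitrary index set $\Lambda$, no finiteness assumption on the family is needed.
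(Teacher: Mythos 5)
Your proposal is correct and follows essentially the same route as the paper's own proof: use Corollary~\ref{binomial-hat} (binomial $J$) together with the hypothesis to collapse $J+E_i=J+\widehat{E}_i=J+F_i$, invoke Proposition~\ref{ehat} for the $G$-niceness of $(J,\widehat{E}_i)$, and then apply Proposition~\ref{intersections-monomials} to the two monomial families $(\widehat{E}_i)_{i\in\Lambda}$ and $(F_i)_{i\in\Lambda}$ to pull $J$ out of the intersections. The only difference is cosmetic ordering of the steps, so there is nothing to add.
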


\begin{proof}
Using the Corollary~\ref{binomial-hat} we have that $J+E_i=J+\widehat{E}_i=F+F_i$ for all $i\in \Lambda$, hence
$$
\bigcap_{i\in \Lambda} (J+ E_i)= \bigcap _{i\in \Lambda} (J+\widehat{E}_i)=\bigcap_{i \in \Lambda} (J+ F_i).
$$
On the other hand, by  Proposition~\ref{ehat},  $(J, \widehat{E}_i)$ is a $G$-nice pair for all $i$. Now using 
Proposition~\ref{intersections-monomials} we get that $\bigcap _{i\in \Lambda} (J+\widehat{E}_i)= J+\bigcap_{i\in \Lambda}\widehat{ E}_i$ and  $\bigcap _{i\in \Lambda} (J+F_i)= J+\bigcap_{i\in \Lambda} F_i$.
\end{proof}

\begin{Example}
{\em
We consider the revlex order with $x>y>z$ on $S=K[x,y,z]$. Let $J=(x^2+y^2+z^2)$ and $E=(xy)$ be ideals in $S$.  
Note that $\mathcal G=\{x^2+y^2+z^2,  xy,  y^3+yz^2\}$ is a Gr\"obner basis of $I=J+E$. Therefore, $\ini(I)=(x^2,xy,y^3)$ strictly includes $\ini(J)+\ini(E)=(x^2,xy)$, and the pair $(J,E)$ is not $G$-nice. 

Let $F\subset S$ be any monomial ideal such that the pair $(J,F)$ is $G$-nice and $E\subseteq F$.
Since $\ini(J+E)\subseteq \ini(J+F)=(x^2)+F$, it follows that $(xy,y^3)\subset F$. Let $E_1=(xy,y^3)$. We have $(x^2,xy,y^3,yz^2)=\ini(J+E')\subseteq (x^2)+F$. Thus $yz^2\in F$.  Clearly, $E_2=(xy,y^3,yz^2)\subseteq F$. 
Since $(J, E_2)$ is a G-nice pair, we conclude that  $E_2 =\widehat{E}$.
} 
\end{Example}

\section{A special class of Gr\"obner-nice pairs of ideals}
\label{sec3}

To verify  if a set is a Gr\"obner basis implies computing the $S$-polyonomial of any two elements in the set and testing if it reduces to zero with respect to the given   set, see \cite{EH, Eis}. Inspired by this, we propose the following.

\begin{Definition}
Let $J,E$ be ideals in $S$ and $\G_J \in \Gr(J)$. We say that $E$ is $S$-nice with respect to $\G_J$ if for any $f\in \G_J$ and $g\in E$ we have $S(f,g)\in E$.
\end{Definition}
 
\begin{Example}\label{ex-snice}{\em
If $J\subseteq E$, then $E$ is $S$-nice with respect to $\G_J$ for any $\G_J\in \Gr(J)$.
}
\end{Example}

\begin{Proposition}
Assume the ideal $E$ is $S$-nice with respect to $\G_J \in \Gr(J)$. Then $(J,E)$ is a $G$-nice pair of ideals.
\end{Proposition}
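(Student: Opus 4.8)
The plan is to invoke Buchberger's criterion together with the equivalence (c) of Theorem~\ref{thm:main}. Concretely, I would fix the Gr\"obner basis $\mathcal G_J=\{f_1,\dots,f_r\}\in\Gr(J)$ with respect to which $E$ is assumed to be $S$-nice, and choose any Gr\"obner basis $\mathcal G_E=\{g_1,\dots,g_p\}\in\Gr(E)$. The union $\mathcal G_J\cup\mathcal G_E$ clearly generates $J+E$, so by Theorem~\ref{thm:main}(c) it suffices to show that $\mathcal G_J\cup\mathcal G_E$ is a Gr\"obner basis of $J+E$; once this is established, the pair $(J,E)$ is $G$-nice.

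To verify the Gr\"obner basis property I would check Buchberger's criterion by reducing the $S$-polynomial $S(u,v)$ to zero modulo $\mathcal G_J\cup\mathcal G_E$ for every pair $u,v$ in the union, splitting into three cases according to where $u$ and $v$ come from. If both $u,v\in\mathcal G_J$, then $S(u,v)\rightarrow_{\mathcal G_J}0$ because $\mathcal G_J$ is a Gr\"obner basis of $J$; symmetrically, if both lie in $\mathcal G_E$, then $S(u,v)\rightarrow_{\mathcal G_E}0$. In either case the reduction persists with respect to the larger set $\mathcal G_J\cup\mathcal G_E$. The essential case is $u=f\in\mathcal G_J$ and $v=g\in\mathcal G_E\subseteq E$: here the $S$-nice hypothesis gives $S(f,g)\in E$, and since $\mathcal G_E$ is a Gr\"obner basis of $E$, every element of $E$ reduces to zero modulo $\mathcal G_E$, so in particular $S(f,g)\rightarrow_{\mathcal G_E}0$, hence $S(f,g)\rightarrow_{\mathcal G_J\cup\mathcal G_E}0$. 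Thus all $S$-polynomials reduce to zero and Buchberger's criterion applies.

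The content of the argument lies entirely in the mixed case, where the definition of $S$-niceness is tailored precisely so that the cross $S$-polynomials land back inside $E$ and therefore reduce away against $\mathcal G_E$. The only point requiring (routine) care is the monotonicity remark that reduction to zero modulo a set of polynomials is preserved when that set is enlarged, so that a reduction carried out inside $\mathcal G_J$ or inside $\mathcal G_E$ remains valid inside the union. I do not expect any genuine obstacle: the proof is a direct transcription of Buchberger's criterion that uses the hypothesis exactly where the cross terms appear, mirroring the structure already seen in the proof of Proposition~\ref{Gnice-sum}.
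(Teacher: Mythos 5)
Your proof is correct and follows essentially the same route as the paper: both invoke Buchberger's criterion on $\mathcal G_J\cup\mathcal G_E$, observing that the only nontrivial $S$-polynomials are the mixed ones $S(f,g)$ with $f\in\mathcal G_J$, $g\in\mathcal G_E$, which lie in $E$ by $S$-niceness and hence reduce to zero against $\mathcal G_E$, after which Theorem~\ref{thm:main}(c) gives the $G$-nice property. The only difference is that you spell out the two same-basis cases and the monotonicity of reduction under enlarging the reducing set, which the paper leaves implicit.
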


\begin{proof}
Let $\G_E$ be any Gr\"obner basis for $E$. We claim that $\G_E\cup \G_J$ is a Gr\"obner basis for $E+J$.
Indeed, we only need to consider $S$-polynomials $S(f,g)$ where $f\in \G_J$ and $g\in \G_E$. Since $S(f,g)\in E$ we infer that the former reduces to $0$ w.r.t. $\G_J\cup \G_E$. Applying Theorem~\ref{thm:main}(c) finishes the proof.
\end{proof}

\begin{Proposition}
\label{snice}
Let $J,E$ be ideals in $S$ and $\G_J\in \Gr(J)$. The following statements are equivalent:
\begin{enumerate}
\item[(a)] the ideal $E$ is $S$-nice with respect to $\G_J$;
\item[(b)] for any $\G_E \in \Gr(E)$, for any $f\in \G_J$ and $g\in \G_E$ one has that $S(f,g)\in E$;
\item[(c)] there exists $\G_E\in \Gr(E)$ such that for any $f\in \G_J$ and $g\in \G_E$ one has that $S(f,g)\in E$.
\end{enumerate}
\end{Proposition}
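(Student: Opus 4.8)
The plan is to prove the cycle (a) \implies (b) \implies (c) \implies (a), where the first two implications are immediate and essentially all the content lies in the last one. For (a) \implies (b), I would simply note that any $\G_E\in\Gr(E)$ satisfies $\G_E\subseteq E$, so the defining property of $S$-niceness in (a), applied to elements $g\in\G_E$, yields (b) at once. The implication (b) \implies (c) is trivial, since a Gr\"obner basis of $E$ exists. Thus the task reduces to showing that the ostensibly weaker condition (c), phrased in terms of a single Gr\"obner basis, already forces the condition (a), which quantifies over all $g\in E$.

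For the substantive implication (c) \implies (a), the first thing I would record is that membership of an $S$-polynomial $S(f,g)$ in $E$ depends only on its $f$-part. Indeed, writing $u=\ini(f)$ and $v=\ini(g)$, we have
\[
S(f,g)=\frac{\lcm(u,v)}{\LT(f)}\,f-\frac{\lcm(u,v)}{\LT(g)}\,g,
\]
and the second summand is a term multiple of $g\in E$, hence lies in $E$. Therefore $S(f,g)\in E$ if and only if $\frac{\lcm(u,v)}{\LT(f)}\,f\in E$, and dividing by the leading coefficient of $f$ shows this is equivalent to $\frac{\lcm(u,v)}{u}\,f\in E$. This reduction strips away the dependence on $g$ itself and leaves only the monomial $\lcm(u,v)/u$ as the relevant datum.

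With this in hand, I would fix $f\in\G_J$ and an arbitrary $g\in E$, and let $\G_E=\{g_1,\dots,g_p\}$ be the Gr\"obner basis supplied by (c). Because $\G_E$ is a Gr\"obner basis, $v=\ini(g)\in\ini(E)=(\ini(g_1),\dots,\ini(g_p))$, so there is an index $i$ with $w:=\ini(g_i)$ dividing $v$. The elementary fact I would invoke next is that $w\mid v$ forces $\lcm(u,w)\mid\lcm(u,v)$, since $\lcm(u,v)$ is then a common multiple of $u$ and $w$; dividing by $u$ gives that $\frac{\lcm(u,w)}{u}$ divides $\frac{\lcm(u,v)}{u}$. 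Hence
\[
\frac{\lcm(u,v)}{u}\,f=\frac{\lcm(u,v)}{\lcm(u,w)}\cdot\frac{\lcm(u,w)}{u}\,f,
\]
where the first factor on the right is a monomial. By (c) we have $S(f,g_i)\in E$, which by the reduction above means $\frac{\lcm(u,w)}{u}\,f\in E$; multiplying by the monomial $\lcm(u,v)/\lcm(u,w)$ keeps us inside the ideal $E$, so $\frac{\lcm(u,v)}{u}\,f\in E$ and therefore $S(f,g)\in E$. As $f\in\G_J$ and $g\in E$ were arbitrary, $E$ is $S$-nice with respect to $\G_J$, proving (a).

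The main obstacle is precisely the passage from a single Gr\"obner-basis element $g_i$ to a general $g\in E$. The key insight that overcomes it is twofold: after discarding the $g$-part, the only relevant quantity is the monomial $\lcm(u,\ini g)/u$, and this monomial grows monotonically in the divisibility order as $\ini(g)$ grows. The Gr\"obner basis property guarantees that the smallest relevant case $\ini(g_i)\mid\ini(g)$ is already covered by the hypothesis, and divisibility then propagates $S$-niceness from $\G_E$ to all of $E$.
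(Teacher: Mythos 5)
Your proof is correct, and the essential implication (c) $\Rightarrow$ (a) is handled by a genuinely different argument from the paper's. The paper takes a standard representation $g=\sum_i u_i g_i$ with $g_i\in\G_E$, $\ini(g)\geq\ini(u_ig_i)$ and $\LT(g)=\LT(u_1g_1)$, and then performs a Buchberger-style telescoping computation which exhibits $S(f,g)$ as
$\frac{\lcm(\ini f,\ini g)}{\lcm(\ini f,\ini g_1)}S(f,g_1)$ minus terms visibly in $E$ (namely a term multiple of $(u_1-\LT(u_1))g_1+h$ with $h=g-u_1g_1\in E$); this uses the full division-algorithm consequence of the Gr\"obner property of $\G_E$. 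You instead make the clean preliminary observation that the $g$-summand of $S(f,g)$ lies in $E$ automatically, so that $S(f,g)\in E$ if and only if the $f$-part $\frac{\lcm(\ini f,\ini g)}{\ini f}f$ lies in $E$; after that you need only the weakest form of the Gr\"obner property --- some $\ini(g_i)$ divides $\ini(g)$ --- together with the divisibility $\lcm(\ini f,\ini g_i)\mid\lcm(\ini f,\ini g)$, to transfer membership from $S(f,g_i)$ to $S(f,g)$ by multiplying by a monomial inside the ideal $E$. Your route is shorter and more transparent: it avoids standard representations and the algebraic manipulation entirely, and as a by-product it isolates a useful reformulation ($E$ is $S$-nice w.r.t.\ $\G_J$ if and only if $\frac{\lcm(\ini f,m)}{\ini f}f\in E$ for every $f\in\G_J$ and every minimal monomial generator $m$ of $\ini(E)$), which makes clear that the condition on $g$ enters only through $\ini(g)$. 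What the paper's computation buys is that it follows the familiar S-pair reduction pattern used in proving Buchberger's criterion, so it requires no observation specific to the one-sided nature of the $S$-nice condition; but for this particular statement your argument is the more economical one. (Both proofs, yours and the paper's, tacitly assume $g\neq 0$ so that $S(f,g)$ is defined; this is harmless.)
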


\begin{proof}
The implications (a)\implies (b) \implies (c) are clear. We suppose (c) holds.
Without loss of generality, we may also assume that the polynomials in $\G_J$ and $\G_E$ are monic.
Let $f\in \G_J$ and $g\in \G_E$. We can write $g=\sum_{i=1}^p u_i g_i$ where $g_i \in \G_E$ and $\ini(g)\geq \ini(u_i g_i)$ for $i=1,\dots, p$ and such that $\LT (g)=\LT(u_1 g_1)$. We set $h=g- u_1 g_1$.
Then {\small
\begin{eqnarray*}
S(f,g) &=& \frac{\lcm(\ini(f), \ini(g))}{\ini(f)} f- \frac{\lcm(\ini(f), \ini(g))}{\ini(g)} g  \\
&=& \frac{\lcm(\ini(f), \ini(g))}{\ini(f)} (f-\LT(f)) -  \frac{\lcm(\ini(f), \ini(g))}{\ini(g)} (g-\LT(u_1g_1))  \\
&=&  \frac{\lcm(\ini(f), \ini(g))}{\ini(f)} (f-\LT(f))   -  \frac{\lcm(\ini(f), \ini(g))}{\ini(g)} (u_1 g_1-\LT(u_1 g_1)) \\
&   &-  \frac{\lcm(\ini(f), \ini(g))}{\ini(g)} h.
\end{eqnarray*}  }
Note that $u_1 g_1-\LT(u_1 g_1)=LT(u_1) (g_1-\LT(g_1))+ (u_1-LT(u_1))g_1$. 
Thus,{\small
\begin{eqnarray*}
S(f,g)&=&   \frac{\lcm(\ini(f), \ini(g))}{\ini(f)} (f-\LT(f))-  \frac{\lcm(\ini(f), \ini(g))}{\ini(g_1)} (g_1-\LT(g_1))  \\
         & &  -\frac{\lcm(\ini(f), \ini(g))}{\ini(g)} (  (u_1-LT(u_1))g_1+h ) \\
         &=& \frac{\lcm(\ini(f), \ini(g))}{ \lcm( \ini(f), \ini(g_1))} S(f, g_1) - \frac{\lcm(\ini(f), \ini(g))}{\ini(g)} (  (u_1-LT(u_1))g_1+h ).
\end{eqnarray*} }
Since $S(f, g_1), g_1, h \in E$ we obtain that $S(f,g) \in E$, too.
This proves statement (a).
\end{proof}

The $S$-nice property is stable when taking intersections.

\begin{Proposition}\label{S-intersect}
Let $J$ be an ideal in $S$ and $\G_J \in \Gr(J)$. Assume that in the family of ideals $(E_i)_{i \in \Lambda}$ each is $S$-nice with respect to $\G_J$. Then
\begin{enumerate}
\item[(a)] the ideal  $\bigcap_{i\in \Lambda} E_i$ is $S$-nice with respect to $\G_J$;
\item[(b)] if $(E_i, E_j)$ is a $G$-nice pair for all $i,j \in \Lambda$, then $\sum_{i\in \Lambda} E_i$ is $S$-nice with respect to $\G_J$.
\end{enumerate}
\end{Proposition}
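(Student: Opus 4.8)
The plan is to handle the two parts separately, with part (a) being essentially immediate and part (b) requiring the Gr\"obner-basis machinery built earlier.

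For part (a), the key observation is that the $S$-polynomial $S(f,g)$ depends only on the two polynomials $f$ and $g$ and not on any ambient ideal. So I would fix $f\in \G_J$ and take an arbitrary $g\in \bigcap_{i\in\Lambda}E_i$. Then $g\in E_i$ for every $i\in\Lambda$, and since each $E_i$ is $S$-nice with respect to $\G_J$, we get $S(f,g)\in E_i$ for every $i\in\Lambda$. Hence $S(f,g)\in \bigcap_{i\in\Lambda}E_i$, which is exactly the statement that the intersection is $S$-nice with respect to $\G_J$. No further work is needed here.

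For part (b), the idea is to exhibit an explicit Gr\"obner basis of $E=\sum_{i\in\Lambda}E_i$ and then invoke the equivalence in Proposition~\ref{snice}. First I would reduce to a finite index set: since $S$ is Noetherian, the ideal $E$ is finitely generated, and each of its finitely many generators lies in a finite subsum, so $E=\sum_{i\in\Lambda_0}E_i$ for some finite $\Lambda_0\subseteq\Lambda$. Relabelling, we may assume $\Lambda=[m]$. Choosing a Gr\"obner basis $\G_{E_i}\in\Gr(E_i)$ for each $i$, the hypothesis that $(E_i,E_j)$ is a $G$-nice pair for all $i,j$ allows me to apply the argument in the proof of Proposition~\ref{Gnice-sum}, which shows that $\G_E:=\bigcup_{i\in[m]}\G_{E_i}$ is a Gr\"obner basis of $E$. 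Having this Gr\"obner basis, I would verify condition (c) of Proposition~\ref{snice}: for any $f\in\G_J$ and $g\in\G_E$ we have $g\in\G_{E_i}\subseteq E_i$ for some $i$, so the $S$-niceness of $E_i$ yields $S(f,g)\in E_i\subseteq E$. By Proposition~\ref{snice} this proves that $E$ is $S$-nice with respect to $\G_J$.

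The step I expect to require the most care is the reduction to a finite index set in part (b), together with confirming that Proposition~\ref{Gnice-sum} (whose statement is phrased for finitely many ideals) indeed guarantees that the union of the chosen Gr\"obner bases is a Gr\"obner basis of the full sum $E$. Once that is established, the remaining verifications are routine applications of Proposition~\ref{snice} and of the definition of the $S$-nice property.
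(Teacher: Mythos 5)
Your proof is correct and follows essentially the same route as the paper's: part (a) is the same one-line observation that $S(f,g)$ lies in every $E_i$, and part (b) likewise picks Gr\"obner bases $\G_i\in\Gr(E_i)$, invokes the argument of Proposition~\ref{Gnice-sum} to conclude that their union is a Gr\"obner basis of the sum, and finishes via Proposition~\ref{snice}. Your explicit Noetherian reduction to a finite index set is a careful touch that the paper leaves implicit, but it does not alter the argument.
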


\begin{proof}
(a): Let $f\in \G_J$ and $g\in \bigcap_{i\in \Lambda} E_i$
Since $E_i$ is $S$-nice w.r.t. $\G_J$ we get that $S(f, g)\in E_i$ for all $i\in \Lambda$. This proves (a).

(b): 
For all $i\in \Lambda$ we pick $\G_i\in  \Gr(E_i)$. Arguing as in the proof of Proposition~\ref{Gnice-sum} we get that $\G=\bigcup_{i\in \Lambda} \G_i$ is a Gr\"obner basis for $\sum_{i\in \Lambda} E_i$. Then for any $f\in \G_J$ and any $g\in \G$ we have that $S(f,g) \in \sum_{i\in \Lambda} E_i$. Conclusion follows by Proposition~\ref{snice}.
\end{proof}

An immediate consequence of the previous result is the following related form of Proposition~\ref{prop:distributivity}.

\begin{Corollary}
Let $J,E$ and $E'$ be ideals in $S$ and $\G_J\in \Gr(J)$. Assume that  $E$ and $E'$ are $S$-nice with respect to $\G_J$.
Then the following conditions are equivalent:
\begin{enumerate}
\item[(a)] $(J+E)\cap (J+E')=J + (E\cap E')$;
\item[(b)] $\ini((J+E)\cap (J+E')) = \ini(J) + \ini(E\cap E')$.
\end{enumerate}
\end{Corollary}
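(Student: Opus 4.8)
The plan is to reduce this directly to Proposition~\ref{prop:distributivity}, using the $S$-nice hypotheses to supply exactly the ingredients that proposition requires. First I would note that since $E$ and $E'$ are $S$-nice with respect to $\G_J$, the Proposition establishing that an $S$-nice ideal forms a $G$-nice pair with $J$ shows that both $(J,E)$ and $(J,E')$ are $G$-nice pairs. Hence the standing hypotheses of Proposition~\ref{prop:distributivity} are already satisfied.

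The crucial extra observation is that $(J, E\cap E')$ is automatically a $G$-nice pair under the present assumptions. Indeed, by Proposition~\ref{S-intersect}(a) the intersection $E\cap E'$ is again $S$-nice with respect to $\G_J$, and applying once more the fact that an $S$-nice ideal forms a $G$-nice pair with $J$ yields that $(J, E\cap E')$ is $G$-nice.

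Next I would compare the two conditions labelled (a). In Proposition~\ref{prop:distributivity}, condition (a) is the conjunction of the distributivity equality $(J+E)\cap(J+E')=J+(E\cap E')$ \emph{together with} the assertion that $(J,E\cap E')$ is a $G$-nice pair. By the previous paragraph the second half of that conjunction now holds for free, so condition (a) of Proposition~\ref{prop:distributivity} is equivalent to condition (a) of the present corollary, which is just the distributivity equality. Since condition (b) is verbatim the same in both statements, the equivalence (a)~$\iff$~(b) follows immediately from Proposition~\ref{prop:distributivity}.

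There is essentially no obstacle beyond this bookkeeping: all the content lies in recognizing that $S$-niceness of $E$ and $E'$ forces $S$-niceness, and hence $G$-niceness, of $E\cap E'$ via Proposition~\ref{S-intersect}(a). This is precisely what renders the auxiliary $G$-nice clause of Proposition~\ref{prop:distributivity} redundant, and it is the one point where the stronger $S$-nice hypothesis (rather than merely assuming $(J,E)$ and $(J,E')$ to be $G$-nice) is used.
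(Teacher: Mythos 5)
Your proof is correct and is precisely the argument the paper intends: the Corollary is stated there as an immediate consequence of Proposition~\ref{S-intersect}, with the point being exactly that $S$-niceness of $E$ and $E'$ forces $E\cap E'$ to be $S$-nice, hence $(J,E\cap E')$ to be a $G$-nice pair, so the auxiliary clause in condition (a) of Proposition~\ref{prop:distributivity} becomes automatic. Nothing is missing.
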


In view of Proposition~\ref{S-intersect}, for any ideal we can define its  $S$-nice (monomial) closure.

\begin{Definition}
Let $J$ be any ideal in $S$ and let $\G_J \in \Gr(J)$. For any ideal $E\subset S$ we set
$$
\widetilde{E}=\bigcap_{E\subseteq F,\  F \text{ is } S\text{-nice  w.r.t. }\G_J} F
$$
and we call it the $S$-nice closure of $E$ with respect to $\G_J$.

Moreover, if $E$ is a monomial ideal, we set
$$
{E}^\sharp=\bigcap_{E\subseteq F,\  F \text{ monomial ideal  is } S\text{-nice  w.r.t. }\G_J} F
$$
and we call it the $S$-nice monomial closure of $E$ with respect to $\G_J$.
\end{Definition}

As with Definition~\ref{def:ehat}, we prefer not to complicate notation and include $\G_J$ in it, as it will be clear from the context the Gr\"obner basis which is used.

\begin{Remark}{\em 
By Proposition~\ref{S-intersect}, $\widetilde{E}$ (resp. $E^\sharp$) is indeed the smallest ideal (resp. the smallest monomial ideal) in $S$ which is $S$-nice with respect to $\G_J$. Clearly, $E\subseteq \widetilde{E} \subseteq E^\sharp$, and also $\widehat{E} \subseteq E^\sharp$. 
By Example~\ref{ex-snice}, the ideal $J+E$ is $S$-nice w.r.t. $\G_J$, hence $J+E\subseteq J+\widetilde{E} \subseteq J+E$. The latter  implies that
$$
J+E=J+\widetilde{E}. 
$$}
\end{Remark}

\begin{Example}
\label{snice-exam}{\em
In $S=K[x,y]$ we consider $J=(x^2+y^2)$ with $\G_J=\{x^2+y^2\}$ and $E=(x^2)$.  If $x>y$ then $S(x^2+y^2, x^2)=y^2$, and so $\widetilde{E}=(x^2, y^2)=J+E$.
On the other hand, if $y>x$ then $S(x^2+y^2, x^2)=x^4 \in E$ and thus $\widetilde{E}=E=(x^2)$.
}
\end{Example}

In general, the following proposition is useful for computing $\widetilde{E}$ and $E^\sharp$.

\begin{Proposition} 
\label{inductive}
Let $J, E$ be  ideals in $S$ and $\G_J\in \Gr(J)$.  Then
\begin{enumerate}
\item[(a)] $\widetilde{E} =\sum_{i\geq 0} E_i$, where the ideal $E_i$ is defined inductively as $E_0=E$ and $E_{i+1}=E_i+(S(f,g):f\in \G_J, g\in E_i)$ for all $i>0$.
\item[(b)] If $E$ is a monomial ideal, then $E^\sharp= \sum_{i\geq 0} F_i$, where  $F_0=E$ and $F_{i+i}$ is the ideal generated by  the monomial terms of the polynomials in $\widetilde{F}_i$, for all $i>0$.
\end{enumerate}
\end{Proposition}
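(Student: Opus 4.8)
The plan is to treat the two statements in parallel, in each case proving that the right-hand side, which I abbreviate $\mathcal{E}:=\sum_{i\ge 0}E_i$ for (a) and $\mathcal{F}:=\sum_{i\ge 0}F_i$ for (b), coincides with the relevant closure by a double inclusion. The first thing I would record is that both sequences are ascending. In (a) this is immediate from $E_{i+1}=E_i+(\dots)\supseteq E_i$. In (b) it follows once one notes that for a monomial ideal $M$ the monomial ideal $\Mon(M)$ generated by all monomials occurring in the elements of $M$ equals $M$ itself (every term of an element of $M$ is a multiple of a generator, hence lies in $M$); writing $\Mon(I)$ for the monomial ideal generated by all monomials occurring in the polynomials of $I$, the ideal $F_i$ being monomial gives $F_i\subseteq\widetilde{F_i}$ and therefore $F_i=\Mon(F_i)\subseteq\Mon(\widetilde{F_i})=F_{i+1}$. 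Since both chains are ascending, the sums coincide with the unions $\bigcup_i E_i$ and $\bigcup_i F_i$, so any element of $\mathcal{E}$ (resp. $\mathcal{F}$) already lies in a single $E_i$ (resp. $F_i$). Throughout I use that, by Proposition~\ref{S-intersect}, the closures $\widetilde{E}$ and $E^\sharp$ are themselves $S$-nice, hence are the \emph{smallest} $S$-nice (resp. monomial $S$-nice) ideals containing $E$.

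For (a), I would first check that $\mathcal{E}$ is $S$-nice and contains $E=E_0$, which forces $\widetilde{E}\subseteq\mathcal{E}$ by minimality: given $f\in\G_J$ and $g\in\mathcal{E}$, choose $i$ with $g\in E_i$, so that $S(f,g)\in E_{i+1}\subseteq\mathcal{E}$ by construction. For the reverse inclusion I would prove $E_i\subseteq\widetilde{E}$ by induction on $i$. The base case $E_0=E\subseteq\widetilde{E}$ is clear, and in the inductive step, for $f\in\G_J$ and $g\in E_i\subseteq\widetilde{E}$ the $S$-niceness of $\widetilde{E}$ yields $S(f,g)\in\widetilde{E}$, so $E_{i+1}\subseteq\widetilde{E}$. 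Summing gives $\mathcal{E}\subseteq\widetilde{E}$, and equality.

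For (b), I would run the same double inclusion while tracking monomiality. For $\mathcal{F}\subseteq E^\sharp$ I argue by induction that $F_i\subseteq E^\sharp$: the base case is clear, and if $F_i\subseteq E^\sharp$ then, since $E^\sharp$ is $S$-nice, its $S$-nice closure satisfies $\widetilde{F_i}\subseteq E^\sharp$, whence $F_{i+1}=\Mon(\widetilde{F_i})\subseteq\Mon(E^\sharp)=E^\sharp$, the last equality because $E^\sharp$ is monomial. For the reverse inclusion it suffices, again by minimality of $E^\sharp$, to verify that $\mathcal{F}$ is a monomial $S$-nice ideal containing $E$; the first two properties are built into the construction, so the content is $S$-niceness. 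Given $f\in\G_J$ and $g\in\mathcal{F}$, choose $i$ with $g\in F_i\subseteq\widetilde{F_i}$; since $\widetilde{F_i}$ is $S$-nice, $S(f,g)\in\widetilde{F_i}$, so every monomial of the polynomial $S(f,g)$ lies in $\Mon(\widetilde{F_i})=F_{i+1}\subseteq\mathcal{F}$, and as $\mathcal{F}$ is a monomial ideal we conclude $S(f,g)\in\mathcal{F}$. Hence $E^\sharp\subseteq\mathcal{F}$.

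I expect the main obstacle to be precisely this last point: reconciling the intermediate closures $\widetilde{F_i}$, which are in general \emph{not} monomial, with the requirement that the limit object be both monomial and $S$-nice. The crux is the observation that the monomials produced by an $S$-polynomial inside $\widetilde{F_i}$ are already absorbed one level up, at $F_{i+1}$, which is exactly what makes the monomialized iteration close up under $S$-polynomials. I would also remark that by Noetherianity the ascending chain $(F_i)$ stabilizes, so $\mathcal{F}$ is a genuine ideal and the recursion terminates in finitely many steps, in parallel with the Remark on computing $\widehat{E}$.
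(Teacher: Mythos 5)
Your proof is correct and follows essentially the same strategy as the paper: a double inclusion in each part, showing the union $\sum_i E_i$ (resp.\ $\sum_i F_i$) is $S$-nice so that minimality of the closure gives one inclusion, and an induction on $i$ for the other. The only difference is that you spell out details the paper leaves implicit (that the chains are ascending so elements lie in a single stage, and that $\widetilde{F_i}\subseteq E^\sharp$ follows from minimality of the $S$-nice closure), which makes the argument for (b) slightly more airtight than the paper's terse version.
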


\begin{proof}
(a): If $f\in \G_J$ and $g\in G_i$, then $S(f,g) \in E_{i+1}$. Therefore, $\sum_{i\geq 0} E_i$ is $S$-nice with respect to $\G_J$.
Conversely, since $E_0\subset \widetilde{E}$ and $\widetilde{E}$ is $S$-nice w.r.t. $\G_J$, it follows that $S(f,g)\in \widetilde{E}$ for any $f\in \G_J$ and $g\in E$. Therefore, $E_1\subseteq \widetilde{E}$.  Inductively, we get that $E_i \subseteq \widetilde{E}$ for any $i \geq 0$. This completes the proof.

(b): If  $m\in F_i$ is  a monomial and $f\in \G_J$, then $S(f, m)\in \widetilde{F_i}\subseteq F_{i+1}$. Therefore, $\sum_{i\geq 0} F_i$ is $S$-nice with respect to $\G_J$. Conversely, if $m \in E$ is a monomial and $f\in \G_J$, then $S(f,m) \in E^\sharp$.
Since $E^\sharp$ is a monomial ideal,, any monomial which is in the support of $S(f,m)$ is in $E^\sharp$. Therefore, $F_1\subseteq E^\sharp$.
Inductively, we get $F_i \subseteq E^\sharp$ for all $i\geq 0$. Thus $\sum_{i\geq 0} F_i=E^\sharp$.
\end{proof}

\begin{Example}{\em
In $S=K[x,y,z]$ we consider the revlex order with $x>y>z$ and the ideals $J=(x^2+y^2+z^2)$, $E=(xy)$, $E_1=(xy, y^3+yz^2)$ and $E_2=(xy, y^3+yz^2+x^2+y^2+z^2)$. From Example~\ref{exm} we have that $J+E_1=J+E_2=J+E$, and that   $(J, E_1)$, $(J, E_2)$ are $G$-nice pairs of ideals. Let $\G_J=\{x^2+y^2+z^2\} \in \Gr(J)$. We claim that
$E_1$ is $S$-nice with respect to $\G_J$, but $E_2$ is not.

We note that $\G_1=\{xy, y^3+yz^2\} \in \Gr(E_1)$ and $\G_2=\{xy, y^3+yz^2+x^2+y^2+z^2, x^3+xz^2\} \in \Gr(E_2)$.
Also, $S(x^2+y^2+z^2, xy)=y^3+yz^2 \in E_1$ and $S(x^2+y^2+z^2, y^3+yz^2)=y^3(y^2+z^2)-x^2(y^3+yz^2)= y^2(y^3+yz^2)-xy(xy^2+xz^2)\in E_1$. This proves the claim. 
Moreover, since $S(x^2+y^2+z^2, xy)=y^3+yz^2$ we infer that, when computed with respect to $\G_J$,  $\widetilde{E}=E_1$.

Let $U$ be a monomial ideal in $S$ which is $S$-nice with respect to $\G_J$, and $E\subseteq U$.
Since $S(x^2+y^2+z^2, xy)=y^3+yz^2 \in U$ one has  that $y^3, yz^2 \in U$. Let  $L=(xy, y^3, yz^2)$.
As $S(x^2+y^2+z^2, y^3)=y^5+y^3z^2\in L$ and $S(x^2+y^2+z^2, yz^2)=y^3z^2+yz^4 \in L$, it follows that $L$ is $S$-nice with respect to $\G_J$ and moreover, $E^\sharp= L$.  We also remark that $J+E\subsetneq J+E^\sharp$.
}
\end{Example}

\begin{Proposition} \label{proplast}
Let $J$ be an ideal in $S$, $\G_J \in \Gr(J)$ and $E$ a monomial ideal in $S$. Then
\begin{enumerate}
\item[(a)] if there exists a monomial ideal $F$ in $S$ which is $S$-nice with respect to $\G_J$ and $J+E=J+F$, then $J+E^\sharp=J+E$;
\item[(b)] if $J$ is a binomial ideal and $\G_J \in \Gr(J)$ consists of binomials, then $\widetilde{E}=E^\sharp$.
\end{enumerate}
\end{Proposition}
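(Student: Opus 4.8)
Both statements compare the two closure operators $\widetilde{(\cdot)}$ and $(\cdot)^\sharp$ attached to $\G_J$. In each case the inclusions $E\subseteq\widetilde E\subseteq E^\sharp$ and the identity $J+E=J+\widetilde E$ recorded above are free, so the real task is to produce, from the available data, a \emph{monomial} ideal that is $S$-nice with respect to $\G_J$, contains $E$, and is small enough. I would dispatch (b) first, since it is self-contained.

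For (b): since $\widetilde E\subseteq E^\sharp$ always holds, it suffices to prove that $\widetilde E$ is a monomial ideal; then $\widetilde E$ is itself a monomial $S$-nice ideal containing $E$, so $E^\sharp\subseteq\widetilde E$ and equality follows. To see that $\widetilde E$ is monomial I would build it through an explicitly monomial chain: set $M_0=E$ and $M_{k+1}=M_k+(S(f,m):f\in\G_J,\ m\in G(M_k))$. As observed in the proof of Corollary~\ref{binomial-hat}, the $S$-polynomial of a binomial and a monomial is (a scalar multiple of) a monomial; since the $f$'s are binomials and the $m$'s are monomials, every $M_k$ is a monomial ideal, hence so is $M=\bigcup_k M_k$. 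Now $M$ is $S$-nice: by Proposition~\ref{snice} it is enough to test $S(f,m)$ for $m$ in the Gröbner basis $G(M)$, and any minimal generator $m$ of $M$ lies in some $M_k$, hence equals a generator in $G(M_k)$ by minimality, so $S(f,m)\in M_{k+1}\subseteq M$. Conversely $M\subseteq\widetilde E$: indeed $M_0=E\subseteq\widetilde E$, and inductively $m\in G(M_k)\subseteq\widetilde E$ together with the $S$-niceness of $\widetilde E$ give $S(f,m)\in\widetilde E$, so $M_{k+1}\subseteq\widetilde E$. Since $\widetilde E$ is the smallest $S$-nice ideal containing $E$ and $M$ is one such, $\widetilde E\subseteq M$; hence $\widetilde E=M$ is monomial, proving (b).

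For (a): one inclusion is free from $E\subseteq E^\sharp$. For the reverse I would interpose the monomial ideal $M:=\operatorname{mono}(J+F)$, the largest monomial ideal contained in $J+F$, equivalently the one generated by all monomials lying in $J+F$. Since $E$ is monomial and $E\subseteq J+E=J+F$, we have $E\subseteq M\subseteq J+F=J+E$, and consequently $J+M=J+F=J+E$; note also that $(J,F)$ is $G$-nice, so $\ini(J+F)=\ini(J)+F$. The plan then hinges on the key lemma that $M$ is $S$-nice with respect to $\G_J$. Granting this, $M$ is a monomial $S$-nice ideal containing $E$, so $E^\sharp\subseteq M$ by minimality, and therefore $J+E^\sharp\subseteq J+M=J+E$; with the free inclusion this gives $J+E^\sharp=J+E$.

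The remaining point, that $M$ is $S$-nice, is the crux and the main obstacle. By Proposition~\ref{snice} it suffices to check $S(f,\mu)\in M$ for $f\in\G_J$ and $\mu\in G(M)$, that is, to show that \emph{every} monomial occurring in $S(f,\mu)$ lies in $J+F$. That $S(f,\mu)$ itself lies in $J+F$ is immediate, since $f\in J$ gives $S(f,\mu)\equiv-\lcm(\ini f,\mu)\pmod J$ and $\lcm(\ini f,\mu)\in M$ as a multiple of $\mu\in M$. The hard part is the monomial-by-monomial control, because a priori a monomial of an element of $J+F$ need not itself lie in $J+F$. I would handle this by Noetherian induction on $\mu$ in the term order, using the dichotomy that a monomial of $J+F$ lies either in $F$ or in $\ini(J)$: if $\mu\in F$ then $S(f,\mu)\in F$ directly by the $S$-niceness of $F$; if $\mu\in\ini(J)\setminus F$, I would reduce $\mu$ modulo $\G_J$ and feed the result into the inductive hypothesis, the mechanism being that the $S$-niceness of $F$ is exactly what absorbs the contributions coming from the tails of the elements of $\G_J$. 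The bookkeeping is clarified by the characterization that a monomial $\mu$ lies in $M$ if and only if its normal form $\NF(\mu\mid\G_J)$ lies in $F$. It is this induction, rather than the surrounding reductions, where the substantive work lies.
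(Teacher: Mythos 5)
Your proof of part (b) is correct and is essentially the paper's argument: both build an ascending chain of monomial ideals out of $S$-polynomials of binomials with monomials and conclude that $\widetilde{E}$ is itself a monomial ideal, hence equal to $E^\sharp$. Your version, which iterates only over the minimal monomial generators $G(M_k)$ and then invokes Proposition~\ref{snice}, is if anything more careful than the paper's direct appeal to Proposition~\ref{inductive}.

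Part (a) is where there is a genuine gap, and it sits exactly where you located the ``substantive work'': the claimed lemma that $M$, the ideal generated by all monomials lying in $J+F$, is $S$-nice with respect to $\G_J$. That lemma is false, so no Noetherian induction can close the gap. Take $S=K[x,y,z,w]$ with the lexicographic order induced by $x>y>z>w$, and $J=(x^2,\,y^2-z^2)$. Since $\{x^2,\,y^2-z^2\}$ is a Gr\"obner basis of $J$ (its only $S$-polynomial, $x^2z^2$, reduces to zero), we have $\ini(J)=(x^2,y^2)$; hence $\G_J=\{f_1,f_2\}$ with $f_1=x^2+y^2-z^2$ and $f_2=y^2-z^2$ is also a (non-reduced) Gr\"obner basis of $J$, a choice the statement permits. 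The ideal $F=(w^2)$ is $S$-nice with respect to $\G_J$: by Proposition~\ref{snice} it suffices to check $S(f_1,w^2)=w^2(y^2-z^2)\in F$ and $S(f_2,w^2)=-z^2w^2\in F$. Now $x^2$ is a monomial in $J+F$, so $x^2\in M$; but $S(f_1,x^2)=f_1-x^2=y^2-z^2\notin M$, because $y^2\notin J+F$ (setting $x=w=0$ would make $y^2$ a multiple of $y^2-z^2$ in $K[y,z]$, which is impossible). So $M$ is not $S$-nice.

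In fact the same data refute statement (a) in the form you are proving it, i.e.\ without requiring $E\subseteq F$. Put $E=(x^2,w^2)$: then $E$ is monomial, $J+E=J+F=(x^2,\,y^2-z^2,\,w^2)$, and $F$ is a monomial $S$-nice ideal, so your hypotheses hold; yet every monomial $S$-nice ideal $U\supseteq E$ must contain $S(f_1,x^2)=y^2-z^2$ and therefore both $y^2$ and $z^2$, whence $E^\sharp=(x^2,y^2,z^2,w^2)$ (this ideal contains $J$, hence is $S$-nice by Example~\ref{ex-snice}). Thus $J+E^\sharp$ contains $y^2$ while $J+E$ does not, so $J+E^\sharp\neq J+E$. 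The only tenable reading of (a) is with the implicit hypothesis $E\subseteq F$, consistent with the conventions of Section~\ref{sec2}; under that reading the statement really is ``clear'', as the paper says: minimality of $E^\sharp$ among monomial $S$-nice ideals containing $E$ gives $E^\sharp\subseteq F$, hence $J+E\subseteq J+E^\sharp\subseteq J+F=J+E$. Your detour through $M$ was designed precisely to avoid assuming $E\subseteq F$, and that is why it cannot be completed.
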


\begin{proof}
Part (a) is clear. For (b) we let $E_0=E$ and we note that $S(b,m)$ is a monomial, for any binomial $b$ and monomial $m$.
Therefore, using the notation from Proposition	~\ref{inductive} we obtain an ascending chain of monomial ideals $E_{i+1}=E_i+(S(f,g):f\in \G_J, g\in E_i)$. This shows that $\widetilde{E}=\sum_{i\geq 0} E_i$ is a monomial ideal and we are done.
\end{proof}

\medskip

\noindent{\bf Acknowledgement}.  
We thank Aldo Conca for pointing our attention to  his paper (\cite{conca}).
We gratefully acknowledge the use of the computer algebra system Singular (\cite{Sing}) for our experiments.

The authors were partly supported by a grant  of the Romanian Ministry of Education, CNCS--UEFISCDI under the 
project  PN-II-ID-PCE-2011-3--1023. 

 \medskip
\noindent The paper is in final form and  no similar paper has been or is being submitted elsewhere.

\medskip

\noindent{\bf Note added in the proof}.  After this paper was finished, we learned from Matteo Varbaro that the equivalence of conditions  (a) and (d) in Theorem~\ref{thm:main} has also  been proved in the Ph.D. thesis of  Michela Di Marca, {\em Connectedness properties of dual graphs of standard graded algebras}, University of Genova, December 2017.

{}

\end{document}